\theoremstyle{plain}
 \newtheorem{theorem}{Theorem}[section]
 \newtheorem{prop}{Proposition}[section]
 \newtheorem{lem}{Lemma}[section]
\theoremstyle{Definition}
 \newtheorem{exm}{Example}[section]
 \newtheorem{dfn}{Definition}[section]
\theoremstyle{remark}
 \newtheorem{rem}{Remark}[section]
 \newtheorem{conj}{Conjecture}[section]
 \numberwithin{equation}{section}
\renewcommand{\leq}{\leqslant}
\renewcommand{\geq}{\geqslant}
\renewcommand{\setminus}{\smallsetminus}
\title[Fatou Set, Julia Set and Escaping Set in Holomorphic Sub...]{Fatou Set, Julia Set and Escaping Set in Holomorphic Subsemigroup Dynamics}
\subjclass[2010]{37F10, 30D05}
\keywords{Transcendental semigroup, escaping set, finite index and co-index, fundamental set etc.}
\author[B. H. Subedi]{\bfseries  Bishnu Hari Subedi}
\address{ 
Central Department of Mathematics \\ 
Institute of Science and Technology   \\ 
Tribhuvan University   \\ 
Kirtipur, Kathmandu\\
Nepal}
\email{subedi.abs@gmail.com / subedi\_bh@cdmathtu.edu.np }
\author[A. Singh]{Ajaya Singh}
\address{Central Department of Mathematics, Institute of Science and Technology, Tribhuvan University, Kirtipur, Kathmandu, Nepal }
\email{singh.ajaya1@gmail.com / singh\_a@cdmathtu.edu.np} 
\thanks{This research work of first author is supported from PhD faculty fellowship of University Grants Commission, Nepal.} 
\begin{document}

{\begin{flushleft}\baselineskip9pt\scriptsize
MANUSCRIPT
\end{flushleft}}
\vspace{18mm} \setcounter{page}{1} \thispagestyle{empty}

\begin{abstract}
We investigate to what extent Fatou set, Julia set and  escaping set of transcendental semigroup is respectively equal to the Fatou set, Julia set and escaping set of its subsemigroup. We define partial fundamental set and fundamental set of transcendental semigroup and on the basis of this set, we prove that Fatou set and escaping set of transcendental semigroup $ S $ are non-empty. 

\end{abstract}

\maketitle
\section{Introduction}
We confine our study on Fatou set, Julia set and escaping set of holomorphic semigroup and its subsemigroup defined in complex plane $ \mathbb{C} $ or extended complex plane $ \mathbb{C}_{\infty} $. Semigroup $ S $ is a very classical algebraic structure with a binary composition that satisfies associative law. It naturally arose from the general mapping of a set into itself. So a set of holomorphic maps on $ \mathbb{C} $ or $ \mathbb{C}_{\infty} $ naturally forms a semigroup. Here, we take a set $ A $ of holomorphic maps and construct a semigroup $ S $ consists of all elements that can be expressed as a finite composition of elements in $ A $. We call such a semigroup $ S $ by \textit{holomorphic semigroup} generated by set $ A $. A non-empty subset $ T $ of holomorphic semigroup $ S $ is a \textit{subsemigroup} of $ S $ if $ f \circ g \in T $ for all $ f, \; g \in T $.  

For our simplicity, we denote the class of all rational maps on $ \mathbb{C_{\infty}} $ by $ \mathscr{R} $ and class of all transcendental entire maps on $ \mathbb{C} $ by $ \mathscr{E} $. 
Our particular interest is to study of the dynamics of the families of above two classes of holomorphic maps.  For a collection $\mathscr{F} = \{f_{\alpha}\}_{\alpha \in \Delta} $ of such maps, let 
$$
S =\langle f_{\alpha} \rangle
$$ 
be a \textit{holomorphic semigroup} generated by them. Here $ \mathscr{F} $ is either a collection $ \mathscr{R} $ of rational maps or a collection $ \mathscr{E} $ of transcendental entire maps (there are several  holomorphic semigroups generated by general meromorphic functions,  but in this paper, we are only  interested in holomorphic semigroups generated by either rational functions or transcendental entire functions). The index set $ \Delta $ to which $ \alpha $  belongs is allowed to be infinite in general unless otherwise stated. 
Here, each $f \in S$ is a holomorphic function and $S$ is closed under functional composition. Thus, $f \in S$ is constructed through the composition of finite number of functions $f_{\alpha_k},\;  (k=1, 2, 3,\ldots, m) $. That is, $f =f_{\alpha_1}\circ f_{\alpha_2}\circ f_{\alpha_3}\circ \cdots\circ f_{\alpha_m}$. In particular,  if $ f_{\alpha} \in \mathscr{R} $, we say $ S =\langle f_{\alpha} \rangle$ a \textit{rational semigroup} and if  $ f_{\alpha} \in \mathscr{E} $, we say $ S =\langle f_{\alpha} \rangle$ a \textit{transcendental semigroup}. 

A semigroup generated by finitely many holomorphic functions $f_{i}, (i = 1, 2, \ldots, \\ n) $  is called \textit{finitely generated  holomorphic semigroup}. We write $S= \langle f_{1},f_{2},\ldots,f_{n} \rangle$.
 If $S$ is generated by only one holomorphic function $f$, then $S$ is \textit{cyclic semigroup}. We write $S = \langle f\rangle$. In this case, each $g \in S$ can be written as $g = f^n$, where $f^n$ is the nth iterates of $f$ with itself. Note that in our study of  semigroup dynamics, we say $S = \langle f\rangle$  a \textit{trivial semigroup}. 
 
 Next, we define and discuss some special collection and sequences of holomorphic functions.
Note that all notions of convergence that we deal in this paper will be with respect to the Euclidean metric on the complex plane $ \mathbb{C} $ or spherical metric on the Riemann sphere $ \mathbb{C}_{\infty} $.

The family $\mathscr{F}$  of complex analytic maps forms a \textit{normal family} in a domain $ D $ if given any composition sequence $ (f_{\alpha}) $ generated by the member of  $ \mathscr{F} $,  there is a subsequence $( f_{\alpha_{k}}) $ which is uniformly convergent or divergent on all compact subsets of $D$. If there is a neighborhood $ U $ of the point $ z\in\mathbb{C} $ such that $\mathscr{F} $ is normal family in $U$, then we say $ \mathscr{F} $ is normal at $ z $. If  $\mathscr{F}$ is a family of members from the semigroup $ S $, then we simply say that $ S $ is normal in the neighborhood of $ z $ or $ S $ is normal at $ z $.

Let  $ f $ be a holomorphic map. We say that  $ f $ \textit{iteratively divergent} at $ z \in \mathbb{C} $ if $  f^n(z)\rightarrow \alpha \; \textrm{as} \; n \rightarrow \infty$, where $ \alpha $  is an essential singularity of $ f $. A sequence $ (f_{k})_{k \in \mathbb{N}} $ of holomorphic maps is said to be \textit{iteratively divergent} at $ z $ if $ f_{k}^{n}(z) \to\alpha_{k} \;\ \text{as}\;\ n\to \infty$ for all $ k \in \mathbb{N} $, where $ \alpha_{k} $  is an essential singularity of $ f_{k} $ for each $ k $.  Semigroup $ S $ is \textit{iteratively divergent} at $ z $ if $f^n(z)\rightarrow \alpha_{f} \; \textrm{as} \; n \rightarrow \infty$, where $ \alpha_{f} $  is an essential singularity of each $ f \in S $. Otherwise, a function $ f  $, sequence $ (f_{k})_{k \in \mathbb{N}} $ and semigroup $ S $  are said to be \textit{iteratively bounded} at $ z $. 
The following result will be clear from the definition of holomorphic semigroup. It shows that every element of holomorphic semigroup can be written as finite composition of the sequence of $f_{\alpha} $
\begin{prop}\label{ts1}
 Let $S   = \langle f_{\alpha} \rangle$ be an arbitrary  holomorphic semigroup. Then for every $ f \in S $,  $f^{m} $(for all $ m \in \mathbb{N}$) can be written as $f^{m} =f_{\alpha_1}\circ f_{\alpha_2}\circ f_{\alpha_3}\circ \cdots\circ f_{\alpha_p}$ for some $ p\in \mathbb{N} $.
 \end{prop}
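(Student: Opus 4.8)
The plan is to unwind the definition of a holomorphic semigroup recalled in the introduction and observe that the statement is essentially a bookkeeping exercise about composition lengths. First I would fix an arbitrary $f \in S = \langle f_{\alpha} \rangle$. By the very construction of $S$, $f$ is built through the composition of finitely many generators, so there exist $k \in \mathbb{N}$ and indices $\alpha_1, \alpha_2, \ldots, \alpha_k \in \Delta$ with
$$
f = f_{\alpha_1}\circ f_{\alpha_2}\circ \cdots \circ f_{\alpha_k}.
$$

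Next, for a fixed $m \in \mathbb{N}$, I would write $f^{m} = \underbrace{f\circ f\circ \cdots \circ f}_{m \text{ times}}$ and substitute the above expression for each occurrence of $f$. Concatenating the $m$ blocks of length $k$ and relabelling the indices yields
$$
f^{m} = f_{\beta_1}\circ f_{\beta_2}\circ \cdots \circ f_{\beta_p}, \qquad p = mk,
$$
where $(\beta_1,\ldots,\beta_p)$ is the list $(\alpha_1,\ldots,\alpha_k)$ repeated $m$ times. Since $p = mk \in \mathbb{N}$, this is exactly the asserted form. Alternatively, one may argue more abstractly: $S$ is closed under functional composition, hence $f^{m}\in S$, and by definition every member of $S$ is a finite composition of generators $f_{\alpha}$, which gives the claim without tracking the precise value of $p$.

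I do not expect any genuine obstacle here; the only point requiring a little care is to make explicit that ``being an element of $S$'' means precisely ``being expressible as a finite composition of the $f_{\alpha}$'', and that this property is stable under taking iterates, which is immediate from the associativity of composition. One could also add a remark that $p$ is not unique, and that when $S$ is not cyclic the indices $\beta_j$ need not all coincide, in contrast with the trivial-semigroup case $S = \langle f\rangle$, where $f^{m}$ is literally the $m$th iterate of a single generator.
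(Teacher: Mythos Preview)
Your proposal is correct and matches the paper's treatment: the paper does not give a formal proof but simply remarks that the result ``will be clear from the definition of holomorphic semigroup,'' and your argument is exactly the unpacking of that definition. The extra bookkeeping you do (setting $p=mk$ and noting non-uniqueness) is fine but more than the paper itself supplies.
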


In classical complex dynamics, each of Fatou set, Julia set and escaping set are defined in two different but equivalent ways.  In first definition, Fatou set is defined as the set of normality of the iterates of given function, Julia set is defined as the complement of the Fatou set and escaping set is defined as the set of points that goes to essential singularity under the iterates of given function. The second definition of  Fatou set is given as a largest completely invariant open set and Julia set is given as a smallest completely invariant close set  whereas escaping set is a completely invariant non-empty neither open nor close set in $\mathbb{C} $. 
Each of these definitions can be naturally extended to the settings of holomorphic semigroup $ S $ but extension definitions are not equivalent.  
Based on above first definition (that is,  on the Fatou-Julia-Eremenko theory of a complex analytic function), the Fatou set, Julia set and escaping set in the settings of holomorphic semigroup are defined as follows.
\begin{dfn}[\textbf{Fatou set, Julia set and escaping set}]\label{2ab} 
\textit{Fatou set} of the holomorphic semigroup $S$ is defined by
  $$
  F (S) = \{z \in \mathbb{C}: S\;\ \textrm{is normal in a neighborhood of}\;\ z\}
  $$
and the \textit{Julia set} $J(S) $ of $S$ is the compliment of $ F(S) $. If $ S $ is a transcendental semigroup, the \textit{escaping set} of $S$ is defined by 
$$
I(S)  = \{z \in \mathbb{C}: S \;  \text{is iteratively divergent at} \;z \}
$$
We call each point of the set $  I(S) $ by \textit{escaping point}.        
\end{dfn} 
It is obvious that $F(S)$ is the largest open subset (of $\mathbb{C}$ or $ \mathbb{C}_{\infty} $) on which the family $\mathscr{F} $ in $S$ (or semigroup $ S $ itself) is normal. Hence its compliment $J(S)$ is a smallest closed set for any  semigroup $S$. Whereas the escaping set $ I(S) $ is neither an open nor a closed set (if it is non-empty) for any semigroup $S$. Any maximally connected subset $ U $ of the Fatou set $ F(S) $ is called a \textit{Fatou component}.  
        
If $S = \langle f\rangle$, then $F(S), J(S)$ and $I(S)$ are respectively the Fatou set, Julia set and escaping set in classical complex dynamics. In this situation we simply write: $F(f), J(f)$ and $I(f)$. 
 
There is  possibility of being Fatou set, Julia set and escaping set of holomorphic semigroup respectively equal to the Fatou set, Julia set and escaping set of its subsemigroup. To get this results, we need the notion of  different indexes  of subsemigroup of a semigroup $ S $.

\begin{dfn}[\textbf{Finite index and cofinite index}]\label{1fi}
A subsemigroup $ T $ of a holomorphic semigroup $ S $ is said to be of \textit{finite index} if there exists  finite collection of elements $ \{f_{1}, f_{2}, \ldots, f_{n} \}$ of $ S^{1} $ where $S^{1} = S \cup \{\text{Identity}\} $ such that 
\begin{equation}\label{1.1}
 S = f_{1}\circ T \cup f_{2}\circ T\cup \ldots \cup f_{n}\circ T  
\end{equation}
The smallest $ n $ that satisfies \ref{1.1} is called \textit{index} of  $ T $  in $ S $.  
Similarly a subsemigroup $ T $ of a holomorphic semigroup $ S $ is said to be of \textit{cofinite index} if there exists  finite collection of elements $ \{f_{1}, f_{2}, \ldots, f_{n} \}$ of $S^{1} $ such that for any $ f\in S $, there is $ i\in \{1, 2, \ldots, n \} $ such that
\begin{equation}\label{1.2}
f_{i}\circ f \in T 
\end{equation} 
The smallest $ n $ that satisfies \ref{1.2} is called \textit{cofinite index} of $ T $  in $ S $.
\end{dfn}
Note that the size of subsemigroup $ T $ inside semigroup $ S $ is measured in terms of index.  If subsemigroup $ T $ has finite index or cofinite index in semigroup $ S $, then we say $ T $ is finite subsemigroup or cofinite subsemigroup respectively. 

In {\cite[Theorems 5.1]{poo}}, K.K. Poon  proved that Fatou set and Julia set of finitely generated abelian transcendental semigroup $ S $ is same as the Fatou set and Julia set of each of its particular function if semigroup $ S $ is generated by finite type transcendental entire maps. In  {\cite[Theorems 3.3]{sub2}}, we proved that escaping set of transcendental semigroup $ S $ is same as escaping set of each of its particular function if semigroup $ S $ gererated by finite type transcendental entire maps. In this paper, we prove the following result.
\begin{theorem}\label{fin1}
If a subsemigroup $ T $ has finite index or  cofinite index  in an abelian transcendental semigroup $ S $, then $ I(S) = I(T), J(S) =J(T)$ and $ F(S) = F(T) $. 
\end{theorem}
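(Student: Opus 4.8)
\noindent\emph{Proof strategy.} The plan is to prove each of the three equalities as a pair of inclusions, one of which is immediate. Since $T\subseteq S$, every composition sequence built from $T$ is also one built from $S$, so normality of $S$ at a point is a stronger demand than normality of $T$ there; hence $F(S)\subseteq F(T)$ and, on complements, $J(T)\subseteq J(S)$. Likewise $z\in I(S)$ forces $f^{n}(z)\to\infty$ for every $f\in T$, so $I(S)\subseteq I(T)$. As $F=\mathbb C\setminus J$ for both semigroups, it remains to establish
\[
F(T)\subseteq F(S)\qquad\text{and}\qquad I(T)\subseteq I(S),
\]
and the two index hypotheses will be handled separately.

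\medskip
\noindent\emph{The finite--index case.} Here I would argue that in fact $S=T$, after which there is nothing more to do. Take a decomposition \eqref{1.1} with $n$ minimal and set $R=\{\,i:f_i\in S\setminus T\,\}$. For $i\in R$, expanding $f_i\in S$ through \eqref{1.1} again gives $f_i=f_{\sigma(i)}\circ h_i$ with $h_i\in T$; were $f_{\sigma(i)}$ the identity we would get $f_i=h_i\in T$, and were $f_{\sigma(i)}\in T$ we would get $f_i\in T$, so $\sigma(i)\in R$. Since $R$ is finite, $\sigma$ has a cycle $i_1\to i_2\to\cdots\to i_k\to i_1$, and composing the relations around it (associativity alone) yields $f_{i_1}=f_{i_1}\circ H$ with $H=h_{i_k}\circ\cdots\circ h_{i_1}\in T$. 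But $f_{i_1}$ and $H$ are both transcendental entire (members of transcendental semigroups), and for such maps a composite grows strictly faster than its outer factor: with $M(r,\varphi)=\max_{|z|=r}|\varphi(z)|$, a classical estimate of P\'olya gives $M(r,f_{i_1}\circ H)\ge M\big(c\,M(r/2,H)-|H(0)|,\,f_{i_1}\big)>M(r,f_{i_1})$ for all large $r$, because $M(r/2,H)/r\to\infty$ while $r\mapsto M(r,f_{i_1})$ is strictly increasing. This contradicts $f_{i_1}\circ H=f_{i_1}$, so $R=\varnothing$; then each $f_i\in\{\mathrm{id}\}\cup T$, whence $S=\bigcup_i f_i\circ T\subseteq T\subseteq S$ and $S=T$.

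\medskip
\noindent\emph{The cofinite--index case.} This case is genuinely non-trivial (for instance $\langle g^{2}\rangle$ has cofinite index in $\langle g\rangle$) and is where the abelian hypothesis is used. Fix $g\in S$; applying \eqref{1.2} to each iterate $g^{m}$ and pigeonholing over the finitely many $f_i$ produces one map $f\in S^{1}$ and an infinite set $M\subseteq\mathbb N$ with $f\circ g^{m}\in T$ for every $m\in M$, and commutativity gives $(f\circ g^{m})^{\ell}=f^{\ell}\circ g^{m\ell}$. I would then use two inputs: (i) standard facts about a commuting pair of maps (each element of $S$, in particular each $f_i$, maps $F(T),J(T),I(T)$ into themselves, and permutable transcendental entire maps agree on the dynamical behaviour of the relevant components); and (ii) the elementary observation that if $\varphi$ is transcendental entire and $\varphi^{km}(z)\to\infty$ along an arithmetic progression then $\varphi^{n}(z)\to\infty$, since $\varphi^{km+r}(z)=\varphi^{r}(\varphi^{km}(z))$ and a bounded subsequence of $\varphi^{km_j+r}(z)$ would force $\varphi^{k(m_j+1)}(z)=\varphi^{\,k-r}(\varphi^{km_j+r}(z))$ bounded, contradicting $\varphi^{k(m_j+1)}(z)\to\infty$. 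Granting these, for $z_{0}\in I(T)$: if $f=\mathrm{id}$ then $g^{m_{0}}\in T$ for some $m_{0}\in M$, so $(g^{m_{0}})^{\ell}(z_{0})\to\infty$ and (ii) gives $g^{n}(z_{0})\to\infty$; if $f\ne\mathrm{id}$ then $f^{\ell}(g^{m\ell}(z_{0}))\to\infty$, and applying \eqref{1.2} to $f\in S$ and iterating the pigeonhole closes up a finite chain among the $f_i$, which by the same ``compose around the cycle'' device places a power of a fixed map into $T$ and forces $z_{0}\in I(f)$; combining $z_0\in I(f)$ with $f^{\ell}(g^{m\ell}(z_{0}))\to\infty$ and (ii) again yields $g^{n}(z_{0})\to\infty$. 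Hence $I(T)\subseteq I(S)$. The inclusion $F(T)\subseteq F(S)$ runs in parallel: for $z_{0}\in F(T)$ and a sequence $(g_{k})\subseteq S$, reduce to $f\circ g_{k}\in T$ with fixed $f$, extract a locally uniform limit of $(f\circ g_{k})$ near $z_0$, and use commutativity together with the already proved $I(S)=I(T)$ to transport this back to a locally uniform limit of $(g_{k})$; then $z_{0}\in F(S)$, so $F(S)=F(T)$ and $J(S)=J(T)$.

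\medskip
\noindent\emph{Main obstacle.} The one delicate point is precisely this last transfer in the cofinite case: passing from control of the post-composed iterate $f\circ g^{m}$ (automatic, since it lies in $T$) back to control of $g^{n}$ itself. Post-composition by a transcendental entire map can in principle send an escaping or normal sequence to a bounded or wildly oscillating one (through a tract of $f$), and excluding this requires both hypotheses at once: abelianness, to make each $f_i$ respect the dynamical partition of $\mathbb C$, and cofiniteness of the index, to tie each $f_i$ back into $T$ via the closing-up argument. The finite--index case, by contrast, is a soft consequence of the growth estimate for composites of transcendental entire functions.
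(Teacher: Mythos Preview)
Your approach departs from the paper's in both halves, and the cofinite half has a genuine gap.

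\textbf{Finite index.} The paper does not try to show $S=T$. It writes each $g\in S$ as $g=f_i\circ h$ with $h\in T$, uses commutativity to swap to $h\circ f_i$, applies a sequence characterisation of the escaping set to pass from $z\in I(T)$ to $f_i(z)\in I(S)$, and then invokes \emph{backward invariance} of $I(S)$ for abelian $S$ to get $z\in I(S)$. Your growth argument that $f_{i_1}\circ H=f_{i_1}$ is impossible for transcendental entire $H$ is essentially sound (a P\'olya--Clunie comparison of maximum moduli) and gives the stronger, purely algebraic conclusion that finite index forces $S=T$ in any transcendental semigroup. This is a legitimately different route; note, incidentally, that it exposes a problem with the paper's Examples~2.1 and~2.3, where the length-one generators are not covered by any coset $f_i\circ T$.

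\textbf{Cofinite index.} Here your argument breaks down. From $(f\circ g^{m})^{\ell}=f^{\ell}\circ g^{m\ell}$ you get $f^{\ell}(g^{m\ell}(z_0))\to\infty$, but the assertion that a cycle among the $f_i$ ``places a power of a fixed map into $T$ and forces $z_0\in I(f)$'' is unjustified: the cycle device produces only an element of $T$ of the form $(f_{j_1}\cdots f_{j_k})^{2}$, not a power of your particular $f$, and in abstract abelian semigroups cofinite index does \emph{not} guarantee that every element has a power in $T$ (take $S=\mathbb N^{2}$ additively, $T=\{(a,b):a\ge 1\}$, $g=(0,1)$). Worse, even if $z_0\in I(f)$ were known, ``$f^{\ell}(w_\ell)\to\infty$'' does not force $w_\ell=g^{m\ell}(z_0)\to\infty$, since post-composition by a transcendental entire map can send a bounded sequence to infinity --- exactly the obstacle you flagged but did not overcome. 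The paper sidesteps this entirely: it works with an arbitrary sequence $(g_j)$ in $S$, pigeonholes to $h_j=f_i\circ g_j\in T$ with one fixed $f_i$, uses commutativity once to write $h_j(z)=g_j(f_i(z))$, transfers the sequence criterion for $I(T)$ at $z$ to the same criterion for $I(S)$ at $f_i(z)$, and then applies backward invariance of $I(S)$ under $f_i$ (available because $S$ is abelian) to conclude $z\in I(S)$. The equalities $J(S)=J(T)$ and $F(S)=F(T)$ are then read off from $\partial I(S)=J(S)$ (Lemma~\ref{3}), not via a separate normal-families argument. The missing ingredient in your proposal is precisely this backward-invariance step.
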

In section 2, we also define another notion of index which is called Rees index. We also proved that if subsemigroup $ T $ has finite Rees index in semigroup $ S $, then $ I(S) = I(T), J(S) =J(T)$ and $ F(S) = F(T) $.

From {\cite[Theorem 3.1 (1) and (3)]{sub1}}, we can say that Fatou set and escaping set of holomorphic semigroup may be empty.  The result {\cite[Theorems 5.1]{poo}} is one of the case of non-empty Fatou set and that of {\cite[Theorems 3.3]{sub2}} is a case of the non-empty escaping set of transcendental semigroup. We see another case of non-empty Fatou set and escaping set on the basis of the following definition.

\begin{dfn}[\textbf{Partial fundamental set and fundamental set}]\label{1ka}
A set $ U $ is a \textit{partial fundamental set} for the semigroup $ S $ if
\begin{enumerate}
\item $ U\neq\emptyset $,
\item $ U\subset R(S) $,
\item $ f(U)\cap U =\emptyset $ for all $ f \in S $.
\\
If in addition to $(1)$, $ (2) $ and $(3)  \;  U $ satisfies the property
\item $\bigcup_{f\in S}f(U) = R(S)$, 
\end{enumerate}
then $ U $ is called \textit{fundamental set} for $ S $. 
\end{dfn}
The set $ R(S) $ is defined and discussed in section 4 of remark \ref{r}. 
From the statements $F(S)\subset \bigcap_{f\in S}F(f)$ and $I(S)\subset \bigcap_{f\in S}I(f)$ ({\cite[Theorem 3.1]{sub1}}), we can say that the Fatou set and the escaping set of holomorphic semigroup may be empty.
On the basis of the definition \ref{1ka}, we prove the following result. 
\begin{theorem}\label{fs1}
 Let $ S $ be holomorphic semigroup and $ U $ is a partial fundamental set for $ S $, then $ U\subset F(S) $. If, in addition, $ S $ be transcendental semigroup and $ U $ is a fundamental set, then $ U\subset I(S) $.
\end{theorem}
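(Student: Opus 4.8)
The proof naturally splits into the two inclusions; throughout, the only inputs are the defining conditions~(1)--(4) of Definition~\ref{1ka} and the properties of $R(S)$ recorded in Remark~\ref{r}, in particular that $R(S)$ is open and is exhausted by the translates $h(U)$, $h\in S^{1}$.

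\emph{The inclusion $U\subseteq F(S)$.} I would show that $S$ is normal on the open set $R(S)$; since $U\subseteq R(S)$ by condition~(2), this immediately gives $U\subseteq F(S)$. Condition~(3) says that every $g\in S$ maps $U$ into $\mathbb{C}\setminus U$, and in fact, for every $h\in S^{1}$ one has $g\circ h\in S$ and hence $(g\circ h)(U)\cap U=\emptyset$. Taking the union over $h\in S^{1}$ and using that $R(S)$ is the union of the sets $h(U)$, it follows that $g\bigl(R(S)\bigr)=\bigcup_{h\in S^{1}}(g\circ h)(U)\subseteq\mathbb{C}\setminus U$ for every $g\in S$. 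Fixing three distinct points $a,b,c\in U$ (only two are needed when $S$ consists of entire maps), we conclude that the family $S$, restricted to $R(S)$, omits the three fixed values $a,b,c$, so $S$ is normal on $R(S)$ by Montel's theorem; hence $U\subseteq R(S)\subseteq F(S)$.

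\emph{The inclusion $U\subseteq I(S)$.} Now $S$ is transcendental, so every element of $S$ is a transcendental entire map and its only essential singularity is $\infty$; by Definition~\ref{2ab} this means $I(S)=\{z:f^{n}(z)\to\infty\ \text{for every}\ f\in S\}=\bigcap_{f\in S}I(f)$, so it suffices to prove that $f^{n}(z_{0})\to\infty$ for each $f\in S$ and each $z_{0}\in U$. Suppose not: for some such $f$ and $z_{0}$ the orbit does not escape, so $f^{n_{k}}(z_{0})\to w$ along a subsequence, with $w$ finite. By condition~(4), $f^{n}(z_{0})\in f^{n}(U)\subseteq R(S)$ for every $n$; by condition~(3), no point $g\bigl(f^{n}(z_{0})\bigr)$ with $g\in S$ lies in $U$; and~(4) presents $R(S)$ as the union of the ``tiles'' $g(U)$, $g\in S^{1}$. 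Since the iterates $f^{n}$, $n\ge1$, are pairwise distinct, the orbit $\bigl(f^{n}(z_{0})\bigr)$ runs through infinitely many distinct tiles.

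\emph{Where the difficulty lies.} The remaining step --- which I expect to be the main obstacle --- is to pass from ``the orbit meets infinitely many tiles'' to ``$f^{n}(z_{0})\to\infty$''. I would argue that the fundamental-set structure forces the tiling $\{g(U)\}_{g\in S^{1}}$ to be locally finite in $R(S)$ (each compact subset of $R(S)$ meets only finitely many tiles), so the orbit eventually leaves every compact subset of $R(S)$; the precise description of $R(S)$ from Remark~\ref{r} must then be invoked to rule out that $f^{n}(z_{0})$ accumulates at a finite point of $\partial R(S)$. This is exactly the place where an attracting (parabolic) petal is excluded: a fundamental domain of such a petal satisfies~(1)--(3) but fails~(4), so only the full hypothesis that $U$ be a fundamental set --- i.e.\ condition~(4) --- can force escape. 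Granting this, $w$ cannot be finite, whence $f^{n}(z_{0})\to\infty$; since $z_{0}\in U$ and $f\in S$ were arbitrary, $U\subseteq I(S)$. The first inclusion, and the bookkeeping with condition~(3), are by contrast routine.
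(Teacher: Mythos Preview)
Your first inclusion works, but you have overcomplicated it in a way that uses a hypothesis you are not given. The first assertion of the theorem only assumes that $U$ is a \emph{partial} fundamental set, so only conditions~(1)--(3) of Definition~\ref{1ka} are available; the exhaustion $R(S)=\bigcup_{h\in S^{1}}h(U)$ is precisely condition~(4) and cannot be used here. The paper's argument is shorter and uses only~(3): since $f(U)\cap U=\emptyset$ for every $f\in S$, the family $S$ omits the open set $U$ on $U$; as $U$ contains more than two points, Montel's theorem gives normality of $S$ on $U$ directly, hence $U\subset F(S)$. Your detour through $R(S)$ proves a stronger statement than needed, at the cost of an illegitimate hypothesis.

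For the second inclusion your approach is genuinely different from the paper's and, as you candidly say, incomplete: you never establish local finiteness of the tiling $\{g(U)\}_{g\in S^{1}}$, nor do you rule out accumulation of the orbit on $\partial R(S)$. The paper avoids this tiling analysis entirely. Its argument runs through the limit set: by Remark~\ref{r} one has $R(S)\cap L(S)=\emptyset$, and since $U\subset R(S)$ (condition~(2)), no point of $U$ lies in $L(S)$. The paper reads this as saying that, for each $z\in U$, any sequence $(f_{m})$ of distinct elements of $S$ admits a subsequence diverging at $z$, and then invokes the external characterization of $I(S)$ given in \cite[Theorem~2.2]{sub1} to conclude $f^{n}(z)\to\infty$ for every $f\in S$. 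Condition~(4) is cited only to ensure the orbit remains in $R(S)$. Thus the ``main obstacle'' you anticipate---passing from infinitely many tiles to escape---is not tackled head-on in the paper; it is absorbed into the $R(S)/L(S)$ dichotomy together with the cited result from \cite{sub1}, which you would need to consult to see how the argument closes.
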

The organization of this paper is as follows: In section 2, we briefly review notion of finite index and co-finite index with suitable examples and we review some results from rational (sub) semigroup dynamics and we extend the same in transcendental (sub) semigroup dynamics. We introduce Rees index of subsemigroup  and we prove the  dynamical similarity of holomorphic semigroup and its subsemigroup. In section 3, we prove theorem \ref{fin1} and we also prove  theorem\ref{fin1} by loosing the condition of abelian if the subsemigroup has finite Rees index. In section 4, we define discontinuous transcendental semigroup and on the basis of this notion, we discuss partial fundamental set and fundamental sets and then we prove theorem \ref{fs1}.   

\section{Results from general holomorphic (sub) semigroup dynamics}
There are various notions of investigation of how large a substructure is inside of an algebraic object in the sense  of sharing  properties and structures.  One of the such a notion is \textit{index} and it is an outstanding idea in general group theory and semigroup theory.  It occurs in many important theorems of group theory and semigroup theory.  The notion of finite index, cofinite index and Rees index of subsemigroup was used to compare how much the size of subsemigroup is large enough in semigroup. If the subsemigroup $ T $ is big enough in semigroup $ S $, then  $ S $ and $ T $ share  many properties. In this context,  our proposed theorem \ref{fin1} states that if $ T $ has finite index or cofinte index in $ S $, then both $ S $ and $ T $ share  the same Fatou set, Julia set and escaping set. In semigroup theory, cofinite index is also known as \textit{Grigorochuk index} and this index was introduced by Grigorochuk \cite{gri} in 1988. Note that  $ T $  is cofinite subsemigroup  of a semigroup $ S $ if it has a cofinite index in  $ S $. Maltcev and Ruskuc {\cite[Theorem 3.1]{malt}} proved that if  for every $ f \in S $ of a finitely generated semigroup and every proper cofinite subsemigroup $ T $, then  $ f \circ T \neq S $. 
Note that if semigroup were a group, the notion of finite index and cofinite index coincide. The subsemigroup $ T $ of a finitely  generated semigroup $ S $ consisting of all words  of finite (some multiple of integer n) length (compositions of finite number of holomorphic functions) has finite index and cofinite index in $ S $.  For instance, for any holomorphic function $ f $, the subsemigroup $ \langle f^{n}\rangle $ always has finite index and cofinite index in a semigroup $ \langle f\rangle $. 

We first see an alternative form of finite index and cofinite index of any subsemigroup of holomorphic semigroup. Let $ T $ be a subsemigroup of holomorphic semigroup $ S $. For any $ f \in S^{1} $ where $ S^{1} =S\cup \{Identity\} $, the set of form $f\circ T $ ( or $T\circ f $) is called \textit{translate} of $ T $ by the function $ f $. Let us define following two types of indexes:
\begin{enumerate}
\item The \textit{transnational index} of $ T $ in $ S $ is the number of distinct translates $ f\circ T $ of $ T $ in $ S $.
\item The \textit{strong orbit transnational index} of $ T $ in $ S $ is the number of distinct translates $ f\circ T $ of $ T $ in $ S $ such that $ f\circ g \circ T = T $ for some $ g \in S^{1} $. 
\end{enumerate}
We prove the following result which show that finite index and transnational index are equivalent and cofinite index and strong orbit transnational index are equivalent. 
\begin{theorem}\label{eqt1}
Let $ T $ be a subsemigroup of holomorphic semigroup $ S $. Then 
\begin{enumerate}
\item $ T $ has finite index in $ S $ if and only if it has transnational index.
\item $ T $ has cofinite index in $ S $ if and only if it has strong orbit transnational index.
\end{enumerate}
\end{theorem}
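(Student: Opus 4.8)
The plan is to prove each of the two biconditionals by ``unwinding the definitions'': the forward implication in each case should be almost immediate, since an expression of $S$ as a finite union of translates exhibits only finitely many distinct translates, and the converse should amount to reassembling such a union (respectively, a cofinite family) from a finite list of translates. The one elementary fact I will lean on repeatedly is that for $f,g\in S^{1}$, if $g\in f\circ T$ then $f\circ g\circ T\subseteq$ no --- more precisely, if $g\in f\circ T$ then $g\circ T\subseteq f\circ T$: writing $g=f\circ t$ with $t\in T$, this is just $t\circ T\subseteq T$. This is what lets one move between the statement ``$f$ belongs to the translate $f_{i}\circ T$'' and the statement ``$f\circ T\subseteq f_{i}\circ T$''.

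For part~(1): if $T$ has finite index, fix a minimal (irredundant) covering $S=f_{1}\circ T\cup\cdots\cup f_{n}\circ T$. For an arbitrary $f\in S$ pick $i$ with $f\in f_{i}\circ T$, so $f\circ T\subseteq f_{i}\circ T$ by the observation above; the task is then to squeeze out the reverse inclusion from the irredundancy of the covering and so conclude $f\circ T\in\{T,f_{1}\circ T,\dots,f_{n}\circ T\}$, which gives a finite transnational index. For the converse, list the finitely many distinct translates and check that they already cover $S$, recovering finite index. Part~(2) is carried out in exactly the same way, except that throughout one restricts to the sub-collection of translates $f\circ T$ for which $f\circ g\circ T=T$ for some $g\in S^{1}$; the extra observation needed here is that this ``return to $T$'' condition is precisely the translate-level reformulation of the membership relation $f_{i}\circ f\in T$ appearing in the definition of cofinite index, so that the witnesses $g$ and the elements $f_{i}$ correspond to one another and the counting matches.

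The step I expect to be the main obstacle is exactly this passage between the element level and the translate level, because the composition semigroups in play have no left cancellation (composition of non-constant entire or rational maps can be cancelled only on the right). Consequently one cannot simply say ``$f\circ T=g\circ T$ implies $f=g$'', and the squeezing argument in the forward direction of part~(1) --- together with the precise matching of the strong-orbit condition against $f_{i}\circ f\in T$ in part~(2) --- must be carried through the containment lemma and the minimality of the chosen families rather than by manipulating the functions directly. Checking that a finite list of translates genuinely covers $S$, so that finite transnational index really does yield finite index, is the other point that needs care; the remaining steps are routine bookkeeping.
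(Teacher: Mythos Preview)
The paper's entire proof of this theorem is the single sentence ``The proof is clear from definitions.'' Your plan is therefore already far more detailed than what the authors supply, and in writing it out you have correctly located the real difficulty: in a semigroup without left cancellation, $g\in f\circ T$ yields only $g\circ T\subseteq f\circ T$, not equality, so a finite covering $S=\bigcup_{i=1}^{n}f_{i}\circ T$ does not by itself force the full collection $\{f\circ T:f\in S^{1}\}$ to be finite. The paper's own Example~2.3 already exhibits this: for $S=\langle f\rangle$ and $T=\langle f^{n}\rangle$ one has $f^{j}\circ T=\{f^{\,j+kn}:k\ge 1\}$, and these sets are pairwise distinct for all $j\ge 0$ (for instance $f^{0}\circ T$ contains $f^{n}$ while $f^{n}\circ T$ does not), so the literal ``number of distinct translates'' is infinite even though $T$ has finite index~$n$. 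The remark immediately after that example makes the same point for $T=S$.

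Thus the ``squeezing'' step you flag as the main obstacle is not merely delicate; under the definitions as written it genuinely fails, and irredundancy of the covering does not rescue it. What the authors evidently intend is that the \emph{minimum number of translates needed to cover $S$} (respectively, to witness the cofinite condition) is finite, and with that reading both biconditionals collapse to tautologies --- which is presumably why they wrote ``clear from definitions.'' Your element-versus-translate bookkeeping is exactly the right instinct for a rigorous argument, but it cannot close the gap because, taken literally, the equivalence is false; the resolution lies in reinterpreting the transnational index rather than in any further lemma.
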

\begin{proof}
The proof is clear from definitions. 
\end{proof}
From this theorem \ref{eqt1} and definition \ref{1fi}, the finite index and cofinite index of subsemigroups of the following examples will be clear. 
\begin{exm}{\cite[Example in page 362]{hin}} \label{ex1}
The subset $ T = \langle f^{2}, \; g^{2}, \;  f\circ g,\;  g\circ f\rangle $ of semigroup $ S = \langle f, \; g\rangle $ is a subsemigroup of $ S $ and it has finite index 3 and cofinite index 2 in $ S $. 
\end{exm}

The more concrete example but similar to the above example \ref{ex1} is as follows. 
The semigroup $ S = \langle \sin z, \cos z\rangle $ has a subsemigroup 
$$
 T = \langle \sin \text{sin z}, \; \cos \text{cos z},\; \sin \text{cos z}, \; \cos \text{sin z} \rangle 
 $$
 Let us denote $ f_{1} = id, \;  f_{2} = \sin z $ and $ f_{3} = \cos z$. Then for any $ f \in S $, we have $ f = f_{i}\circ h \in f_{i} \circ T$ for some $ h \in T $. The number distinct translates of  $ T $ in $ S $  are $f_{1} \circ T =T, f_{2}\circ T $ and $f_{3}\circ T $. So $ S = T \cup f_{2}\circ  T\cup f_{3}\circ T $. This shows that $ T $ has finite index 3 in $ S $. 

Furthermore, if $ f \in S $, then  $ f = h $ or $ f = \sin z \circ h $ or $ f = \cos z \circ h $ for $ h \in T $. Let us choose $ f_{1} = id $ and $ f_{2} = \sin z\;  \text{or}\; \cos z $. If $ f = h $, then $ f_{1} \circ h  = id \circ h = h\in T $. If $ f = \sin z \circ h $, then $ f_{2} \circ f = \sin z \circ \sin z \circ h = \sin \text{sin z} \circ h\in T $. If $ f = \cos z \circ h $, then $ f_{2} \circ f = \cos z \circ \sin z \circ h = \cos \text{sin z} \circ h\in T $. We can choose other combinations, but anyway, we get element of semigroup $ T $. This shows that cofinite index of $ T $ in $ S $ is 2.
\begin{exm}
Let $ S = \langle f, g \rangle $ and $ T = \{\text{words (composition) begining  with} \; f \} $. $ T $  has no finite index in $ S $. The only cofinite subsemigroup of $ T $ is $ T $ itself. So $ T $ has cofinite index 1 in $ S $.  Note that $ S $ finitely generated but $ T $ is not. Since any generating set of $ T $ must contain $ \{f \circ g^{n}: n \geq 1 \} $.  
\end{exm}
\begin{exm}\label{ex3}
Let $ S = \langle f \rangle $ where $ f $ is a holomorphic map and $ T = \langle f^{n} \rangle $. 
The subsemigroup $ T $ has  n-different translates  in $ S $, which are $ T, f\circ T, \ldots ,f^{n-1}\circ T $. So $ T  $ has finite index n in $ S $. In this case, the only cofinite subsemigroup of $ T $ is $ T $ itself. So $ T $ has cofinite index 1 in $ S $.
\end{exm}
In example \ref{ex3}, if we choose subsemigroup of $ S $ as a $ S $ itself, then there are infinitely many translates of $ S $, namely, $ h \circ S = h \circ \langle f\rangle $ for all $ h \in S $.  So, $ S $ has no finite index in itself. Again, it has cofinite index 1 in itself.

From the theorem 3.1 of \cite{sub1}, we can prove the following result:
\begin{lem}\label{fi3}
For any subsemigroup $ T $ of a holomorphic semigroup $ S $, we have $F(S) \subset F(T), J(S) \supset J(T)$. 
\end{lem}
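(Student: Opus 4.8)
The plan is to use the elementary fact that normality of a family of holomorphic maps is inherited by any subfamily, so that replacing $S$ by the smaller semigroup $T$ can only enlarge (never shrink) the set of normality. Although the statement is advertised as a consequence of \cite[Theorem 3.1]{sub1}, the quickest route is a direct one, and I would argue as follows.

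First I would unwind Definition \ref{2ab}. Fix $z\in F(S)$; then there is a neighbourhood $U$ of $z$ on which $S$ is a normal family, that is, every composition sequence $(g_{k})$ generated by members of $S$ admits a subsequence that is uniformly convergent or uniformly divergent on all compact subsets of $U$. Since $T$ is a subsemigroup of $S$, every element of $T$ is an element of $S$, and consequently every composition sequence generated by members of $T$ is, verbatim, a composition sequence generated by members of $S$. Hence any such sequence coming from $T$ already has the required convergent-or-divergent subsequence on $U$; in other words $T$ is normal in the neighbourhood $U$ of $z$, so $z\in F(T)$. As $z\in F(S)$ was arbitrary, $F(S)\subset F(T)$.

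The Julia set inclusion is then immediate. Taking complements in $\mathbb{C}$ (respectively in $\mathbb{C}_{\infty}$), we get $J(T)=\mathbb{C}\setminus F(T)\subset \mathbb{C}\setminus F(S)=J(S)$, that is, $J(S)\supset J(T)$.

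This proof has essentially no obstacle; the one point that deserves care is the passage between the composition-sequence formulation of a normal family in Definition \ref{2ab} and the monotonicity claim used above, namely that the collection of composition sequences built from $T$ embeds into the collection of composition sequences built from $S$. This uses nothing beyond the inclusion $T\subset S$ together with the fact that $T$ is closed under composition, so that finite compositions of elements of $T$ are again legitimate elements, hence legitimate composition factors, of $S$.
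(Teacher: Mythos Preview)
Your argument is correct and in fact cleaner than the paper's. The paper does not argue directly from the definition of normality; instead it invokes \cite[Theorem~3.1]{sub1} to get $F(S)\subset\bigcap_{f\in S}F(f)$, observes that each $g\in T$ lies in $S$ so that $F(S)\subset\bigcap_{g\in T}F(g)$, and then passes from $z\in F(g)$ for every $g\in T$ to $z\in F(T)$. That last implication is precisely the nontrivial step (in general $\bigcap_{g\in T}F(g)$ can strictly contain $F(T)$), and justifying it ultimately requires the very monotonicity-of-normality argument you give. Your direct route---any composition sequence from $T$ is already a composition sequence from $S$, hence inherits a locally uniformly convergent or divergent subsequence on the same neighbourhood---bypasses the detour through the individual cyclic Fatou sets and makes the inclusion $F(S)\subset F(T)$ immediate. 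Both approaches then obtain $J(S)\supset J(T)$ by complementation.
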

\begin{proof}
We prove $F(S) \subset F(T) $. The next one $ J(T) \supset J(S) $ will be proved by taking the complement of $F(S) \subset F(T)$ in $ \mathbb{C} $.
By the theorem 3.1 of \cite{sub1}, $ F(S) \subset \cap_{f \in S } F(f) $ and $ F(T) \subset \cap_{g \in T} F(g) $ for any subsemigroup $ T $ of semigroup $ S $. Since any $ g \in T $ is also in $ S $, so by the same theorem 3.1 of \cite{sub1}, we also have $ F(S) \subset F(g) $ for all $ g \in T $ and hence $ F(S) \subset \cap_{g \in T } F(g) $. Now for any $ z \in F(S) $, we have $ z \in \cap_{g \in T } F(g) $  for all $ g \in T $. This implies $ z \in F(g) $ for all $ g \in T $.  This proves $ z \in F(T) $ and hence $ F(S) \subset F(T) $. 
\end{proof}

 Hinkannen and Martin {\cite [Theorem 2.4]{hin}} proved that if a subsemigroup $ T $ has finite index or cofinite index in the rational semigroup $ S $, then $ F(S) = F(T) $ and $ J(S) =  J(T) $. In the following theorem, we prove the same result in the case of general holomorphic semigroup. Note that in our study, by general holomorphic semigroup, we mean either it is rational semigroup or a  transcendental semigroup. 
\begin{theorem}\label{th1}
If a subsemigroup $ T $ has finite index or  cofinite index in the holomorphic semigroup $ S $, then $ F(S) = F(T) $ and $ J(S) =  J(T) $.
\end{theorem}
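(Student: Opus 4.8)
The starting point is Lemma~\ref{fi3}, which already gives $F(S)\subset F(T)$ and $J(T)\subset J(S)$ for every subsemigroup $T$ of $S$, with no condition on the index. So the whole content of the theorem is the reverse inclusion $F(T)\subset F(S)$; taking complements in $\mathbb{C}$ (or $\mathbb{C}_{\infty}$) then also gives $J(S)=J(T)$. If $F(T)=\emptyset$ the inclusion $F(S)\subset F(T)$ forces $F(S)=\emptyset$ and there is nothing to prove, so I would fix a point $z_{0}\in F(T)$ and a neighbourhood $U$ of $z_{0}$ on which the family $T$ is normal, and show that $S$ is normal at $z_{0}$.

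\textit{Finite index case.} By Definition~\ref{1fi}, write $S=f_{1}\circ T\cup f_{2}\circ T\cup\cdots\cup f_{n}\circ T$ with $f_{1},\dots,f_{n}\in S^{1}$. Let $(\sigma_{k})$ be an arbitrary sequence in $S$. For each $k$ there are $i_{k}\in\{1,\dots,n\}$ and $h_{k}\in T$ with $\sigma_{k}=f_{i_{k}}\circ h_{k}$; as there are only finitely many possible values of $i_{k}$, a subsequence has $i_{k}=i$ fixed, so $\sigma_{k}=f_{i}\circ h_{k}$ with $h_{k}\in T$. Since $T$ is normal at $z_{0}$, a further subsequence of $(h_{k})$ converges locally uniformly near $z_{0}$ either to a holomorphic function or to $\infty$, and left-composition with the fixed holomorphic map $f_{i}$ yields a locally uniformly convergent subsequence of $(\sigma_{k})$ --- immediately when the limit is holomorphic, and, as explained below, also in the divergent case. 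Hence $S$ is normal at $z_{0}$, i.e. $z_{0}\in F(S)$; this gives $F(T)\subset F(S)$, and therefore $F(S)=F(T)$ and $J(S)=J(T)$.

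\textit{Cofinite index case.} Here Definition~\ref{1fi} supplies $f_{1},\dots,f_{n}\in S^{1}$ such that every $f\in S$ satisfies $f_{i}\circ f\in T$ for some $i$. Given a sequence $(\sigma_{k})$ in $S$, choose for each $k$ an index $j_{k}$ with $f_{j_{k}}\circ\sigma_{k}\in T$ and, after passing to a subsequence, assume $j_{k}=j$ is fixed; then $(f_{j}\circ\sigma_{k})$ is a sequence in $T$, so by normality of $T$ at $z_{0}$ it has a subsequence converging locally uniformly near $z_{0}$. If $f_{j}$ is the identity we are done at once; otherwise it remains to push this convergence back through the fixed map $f_{j}$ on the left, that is, to deduce that $(\sigma_{k})$ itself has a locally uniformly convergent subsequence near $z_{0}$, which again gives $z_{0}\in F(S)$.

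\textit{Where the work lies.} This scheme is exactly Hinkkanen and Martin's for rational semigroups {\cite[Theorem 2.4]{hin}}, and for rational $f_{i}$ both composition steps are trivial, since a rational map is continuous on all of $\mathbb{C}_{\infty}$. The genuinely new point in the holomorphic, hence possibly transcendental, setting is the behaviour of left-composition with the finitely many transcendental entire maps $f_{i}\in S^{1}$: because $\infty$ is an essential singularity of such an $f_{i}$, the subcase $h_{k}\to\infty$ in the finite index argument and the reflection step in the cofinite index argument are not automatic. I expect to settle them by shrinking $U$ around $z_{0}$, by exploiting that each $f_{i}$ lies in $S^{1}$ so that Lemma~\ref{fi3} already pins down the Fatou sets and orbits of the points that actually occur, and by using that a non-constant holomorphic map is open and locally modelled on $w\mapsto w^{m}$, which transfers a modulus-of-continuity bound in both directions on compact subsets of $\mathbb{C}$. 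Establishing this transcendental composition lemma is the main obstacle; once it is in hand, both cases close as above.
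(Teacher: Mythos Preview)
Your outline coincides with the paper's own argument, which is explicitly labelled ``Sketch of the proof'': reduce to $F(T)\subset F(S)$ via Lemma~\ref{fi3}, pass to a subsequence so that a single $f_{i}$ (resp.\ $f_{j}$) is fixed, and then transfer normality across the fixed left factor. The paper simply asserts at this point that ``the sequence $(h_{j_{k}})$ has a convergent subsequence so do the sequences $(g_{j_{k}})$'', and in the cofinite case that ``the sequence $h_{j}$ has convergent subsequence in $T$ so does the sequence $(g_{j})$''. In other words, the very step you flag as ``where the work lies'' is precisely the step the paper leaves unjustified; your scheme and the paper's are the same, and both stop at the same place.

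That said, your proposed remedy does not close the gap. In the finite index case, when $h_{k}\to\infty$ locally uniformly and $f_{i}$ is transcendental entire, the essential singularity at $\infty$ means that $f_{i}\circ h_{k}$ need not be normal: by the great Picard theorem the values $f_{i}(w)$ for $|w|$ large fill out $\mathbb{C}$ minus at most one point, so no amount of shrinking $U$ or invoking the local $w\mapsto w^{m}$ model (which is a statement about \emph{finite} points) controls $f_{i}\circ h_{k}$. In the cofinite index case the difficulty is the opposite one: local uniform convergence of $f_{j}\circ\sigma_{k}$ does not in general imply a convergent subsequence of $(\sigma_{k})$, since $f_{j}$ is far from injective and ``pushing back'' through it is not a modulus-of-continuity matter; openness of $f_{j}$ goes the wrong way here. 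Your appeal to Lemma~\ref{fi3} to ``pin down the Fatou sets and orbits of the points that actually occur'' is circular, because the orbit information you would need is exactly that $z_{0}\in F(S)$. So the transcendental composition lemma you anticipate is not established, and without it neither your argument nor the paper's sketch is complete.
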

\begin{proof}[Sketch of the proof]
From the above lemma \ref{fi3}, $ F(S) \subset F(T) $ for any holomorphic semigroup $ S $.  If $ S $ is a rational semigroup, the result follows from {\cite [Theorem 2.4]{hin}}. We prove other inclusion if $ S $ is a transcendental semigroup.

Let subsemigroup $ T $ of a semigroup $ S $ has finite index $ n $, then by the definition \ref{1fi},  there exists  finite collection of elements $ \{f_{1}, f_{2}, \ldots, f_{n} \}$ of $ S \cup \{\text{Identity}\} $ such that 
$$
S  = f_{1}\circ T \cup f_{2}\circ T\cup \ldots \cup f_{n}\circ T 
$$
Then for any $ g \in S $, there is $ h \in T $ such that $ g = f_{i} \circ h $.
Choose a sequence $ (g_{j})_{j \in \mathbb{N}} $ in $ S $, then each $ g_{j} $ is of the form  $ g_{j} = f_{i} \circ h_{j} $, where $ h_{j} \in T $,  $ 1 \leq i \leq n $. Here, we may assume same $ i $ for all $ j $.  So without loss of generality, we may choose a subsequence $ (g_{j_{k}}) $ of $ (g_{j}) $ such that $ g_{j_{k}} = f_{i} \circ h_{j_{k}} $ for particular $ f_{i} $,  where $ (h_{j_{k}})$ is a subsequence of $ (h_{j}) $ in  $T$. Since on $ F(T) $, the sequence $ (h_{j_{k}})$ has a convergent subsequence so do the sequences $(g_{j_{k}}) $ and $(g_{j}) $  in $ F(S) $. This proves $ F(T) \subset F(S) $. 

Let subsemigroup $ T $ of a semigroup $ S $ has cofinite index $ n $, then by the definition \ref{1fi},  there exists  finite collection of elements $ \{f_{1}, f_{2}, \ldots, f_{n} \}$ of $ S \cup \{\text{Identity}\} $ such that for every $ f\in S $, there is  $ i\in \{1, 2, \ldots, n \} $ such that
$f_{i}\circ f \in T $. 
Let us choose a sequence $ (g_{j})_{j \in \mathbb{N}} $ in $ S $, then for each $ j $, there is a $ i $ with $ 1 \leq i \leq n $  such that $ f_{i} \circ g_{j} = h_{j} \in T $. Let $ z \in F(T) $. Then the sequence $ h_{j} $  has convergent subsequence in $ T $ so does the sequence  $ (g_{j}) $ in $ F(S) $. This proves $ F(T) \subset F(S) $. 

\end{proof} 

Next, we see a special  subsemigroup  of holomorphic semigroup that yields cofinite index. 
\begin{dfn}[\textbf{Stablizer,  wandering component and  stable domains}]\label{1g}
For a holomorphic  semigroup $ S $, let $ U $ be a component of the  Fatou set $ F(S) $ and $ U_{f} $ be a component of Fatou set containing $ f(U) $ for some $ f\in S $.  The set of the form 
$$S_{U} = \{f\in S : U_{f} = U\}  $$
is called \textit{stabilizer} of $ U $ on $ S $. If $ S_{U} $ is non-empty,  we say that a component $ U $ satisfying  $U_{f} = U  $ is called \textit{stable basin} for  $ S $. The component $ U $ of $ F(S) $ is called wandering if the set $ \{U_{f}: f \in S \} $ contains infinitely many elements. That is, $ U $ is a wandering domain if there is sequence of elements $ \{f_{i}\} $ of $ S $ such that $ U_{f_{i}}  \neq U_{f_{j}}$ for $ i \neq j $. Furthermore, the component $ U $ of $ F(S) $ is called strictly wandering if $U_{f} = U_{g} $ implies $ f =g $. A stable basin $ U $ of a holomorphic semigroup $ S $ is
\begin{enumerate}
\item  \textit{attracting} if it is a subdomain of attracting basin of each $ f\in S_{U} $
\item  \textit{supper attracting} if it is a subdomain of supper attracting basin of each $ f\in S_{U} $
\item  \textit{parabolic} if it is a subdomain of parabolic basin of each $ f\in S_{U} $
\item  \textit{Siegel} if it is a subdomain of Siegel disk of each $ f\in S_{U} $
\item  \textit{Baker} if it is a subdomain of Baker domain  of each $ f\in S_{U} $
\item  \textit{Hermann} if it is a subdomain of Hermann  ring of each $ f\in S_{U} $
\end{enumerate}
\end{dfn}
In classical holomorphic iteration theory, the stable basin is one of the above types but in transcendental iteration theory, the stable basin is not a Hermann because a transcendental entire function does not have Hermann ring {\cite[Proposition 4.2] {hou}}.

Note that for any rational function  $ f $, we always have $ U_{f} = U $. So $ U_{S} $ is non-empty for a rational semigroup $ S $.  However,  if $ f $ is transcendental, it is possible that $ U_{f} \neq U $. So, $ U_{S} $ may be empty for transcendental semigroup $ S $.  Bergweiler and Rohde \cite{ber} proved that $ U_{f} - U $ contains at most one point which is an asymptotic value of $ f $ if  $ f $  is an entire function. Note that value in $ U_{f} - U $ need not be omitted value.  For example, the transcendental entire function $ f(z) = ze^{-(1/2 z^{2}+ 3/2 z -1)} $ has an attracting fixed point $ 0 $. Since $ f(z) \to 0  $ as $ n\to \infty $, so $ 0 $ is an asymptotic value of $ f $. If we let $ U $ a component of Fatou set $ F(f) $ that contains all large positive real numbers, then $ 0\notin f(U) $. There is a  Fatou component $ U_{f} $ containing $ f(U) $ that contains 0.

\begin{lem}\label{ss1}
Let $ S $ be a holomorphic  semigroup. Then the stabilizer $S_{U}$ (if it is non-empty) is a subsemigroup of $ S $ and $ F(S) \subset F(S_{U}), \; J(S) \supset J(S_{U}) $. 
\end{lem}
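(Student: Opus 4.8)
The plan is to establish the two assertions separately, the second being an immediate consequence of the first together with Lemma \ref{fi3}.

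First I would check that $S_U$ is closed under composition. Take $f,g\in S_U$, so that by Definition \ref{1g} the Fatou components $U_f$ and $U_g$ of $F(S)$ containing $f(U)$ and $g(U)$ respectively both coincide with $U$; in particular $f(U)\subset U$ and $g(U)\subset U$. Hence
\[
(f\circ g)(U)=f\bigl(g(U)\bigr)\subset f(U)\subset U .
\]
Since the Fatou set $F(S)$ is forward invariant under every element of $S$ (Theorem 3.1 of \cite{sub1}), the set $(f\circ g)(U)$ is a connected subset of $F(S)$ that lies inside the single component $U$, so the component $U_{f\circ g}$ of $F(S)$ containing $(f\circ g)(U)$ is exactly $U$. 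Therefore $f\circ g\in S_U$, which shows $S_U$ is a subsemigroup of $S$; it is non-empty by hypothesis.

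For the dynamical part, once $S_U$ is known to be a subsemigroup of $S$ I would simply invoke Lemma \ref{fi3} with $T=S_U$: this immediately gives $F(S)\subset F(S_U)$, and taking complements in $\mathbb{C}$ (or in $\mathbb{C}_\infty$ in the rational case) yields $J(S)\supset J(S_U)$.

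The only place that deserves a little care is the passage from ``$(f\circ g)(U)\subset U$'' to ``$U_{f\circ g}=U$'': this relies on $f\circ g$ mapping $F(S)$ into $F(S)$, so that $U_{f\circ g}$ is well defined, together with the elementary fact that a connected subset of an open set lies in exactly one of its components. Beyond this, the argument is purely formal, so I do not expect any real obstacle.
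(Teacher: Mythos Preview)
Your proof is correct and follows essentially the same route as the paper: both show $(f\circ g)(U)\subset U$ from $f(U)\subset U_f=U$ and $g(U)\subset U_g=U$, deduce $U_{f\circ g}=U$ from the fact that components of $F(S)$ containing a common connected set must coincide, and then appeal directly to Lemma~\ref{fi3} for the inclusions $F(S)\subset F(S_U)$ and $J(S)\supset J(S_U)$. Your justification that $U_{f\circ g}=U$ via forward invariance and connectedness is, if anything, slightly more explicit than the paper's.
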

\begin{proof}
Let $ f, g \in S_{U} $, then by the definition \ref{1g}, $U_{f} = U  $ and $U_{g} = U  $. Since $ U_{f} $ and  $ U_{g} $ are  components of Fatou set containing $ f(U) $ and $ g(U) $ respectively. That is, $ f(U) \subseteq U_{f} = U  $ and $ g(U) \subseteq U_{g} = U \Longrightarrow (f\circ g)(U) = f(g(U))\subseteq f(U_{g}) = f(U)\subseteq U_{f} = U$. Since $(f \circ g)(U) \subseteq U_{f \circ g} $. Then either $ U_{f\circ g} \subseteq U$ or $ U \subseteq U_{f\circ g} $. The only possibility in this case is $ U_{f\circ g} = U$. Hence $ f\circ g \in S_{U} $, which proves that $S_{U}$ is a subsemigroup of $ S $. The proof of $ F(S) \subset F(S_{U}), \; J(S) \supset J(S_{U})$ follows from lemma \ref{fi3}.
\end{proof}

There may be a connection between no wandering domains and the stable basins of cofinite index. We have proposed the connection in the following statement for general holomorphic semigroup $ S $.

\begin{theorem}\label{ss2}
 Let $ S $ be a holomorphic  semigroup with no wandering domains. Let $ U $ be any component of Fatou set. Then the forward orbit $ \{U_{f}: f\in S\} $ of $ U $ under $ S $ contains a stabilizer of $ U $ of cofinite index.
\end{theorem}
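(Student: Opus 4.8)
The plan is to convert the hypothesis of no wandering domains into a statement about a \emph{finite} set on which the monoid $S^{1}=S\cup\{\mathrm{Identity}\}$ acts, and then to extract the desired cofinite stabilizer from the combinatorics of that finite action. First I would look at the forward orbit of $U$,
\[
\mathcal{O}(U)=\{\,U_{f}:f\in S^{1}\,\},
\]
where for a Fatou component $V$ and $f\in S^{1}$ we write $V_{f}$ for the component of $F(S)$ containing $f(V)$; this makes sense because $F(S)$ is forward invariant under every element of $S$ and $V$ is connected, so $f(V)$ meets just one component. Since $S$ has no wandering domains, $\mathcal{O}(U)$ is finite. Using that two Fatou components are either equal or disjoint, one checks the identity $(V_{f})_{g}=V_{g\circ f}$, so that $V\mapsto V_{f}$ is a left action of the monoid $S^{1}$ on Fatou components, under which the finite set $\mathcal{O}(U)$ is invariant. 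I would then equip $\mathcal{O}(U)$ with the reachability preorder $V\preceq W$ iff $W=V_{g}$ for some $g\in S^{1}$ (reflexive and transitive), and note that $\mathcal{O}(W)\subseteq\mathcal{O}(V)$ whenever $W\in\mathcal{O}(V)$.

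Next I would choose $V^{\ast}\in\mathcal{O}(U)$ for which $\mathcal{O}(V^{\ast})$ has the least cardinality. Then every $W\in\mathcal{O}(V^{\ast})$ has $\mathcal{O}(W)\subseteq\mathcal{O}(V^{\ast})$, so minimality forces $\mathcal{O}(W)=\mathcal{O}(V^{\ast})$ and hence $V^{\ast}\in\mathcal{O}(W)$; that is, $V^{\ast}$ is reachable from every member of its own orbit. Picking any $f_{0}\in S$ and applying this to $W_{0}=V^{\ast}_{f_{0}}$ gives $g_{0}\in S^{1}$ with $(W_{0})_{g_{0}}=V^{\ast}$, so $V^{\ast}_{g_{0}\circ f_{0}}=V^{\ast}$ with $g_{0}\circ f_{0}\in S$; thus $V^{\ast}$ is a stable basin, it lies in $\{U_{f}:f\in S\}$, and by Lemma \ref{ss1} its stabilizer $S_{V^{\ast}}$ is a subsemigroup of $S$. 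To bound the index, list $\mathcal{O}(V^{\ast})=\{W_{1},\dots,W_{k}\}$ and fix for each $j$ some $g_{j}\in S^{1}$ with $(W_{j})_{g_{j}}=V^{\ast}$. For an arbitrary $f\in S$ the component $V^{\ast}_{f}$ equals some $W_{j}$, and then $V^{\ast}_{g_{j}\circ f}=(V^{\ast}_{f})_{g_{j}}=(W_{j})_{g_{j}}=V^{\ast}$, i.e. $g_{j}\circ f\in S_{V^{\ast}}$; hence $\{g_{1},\dots,g_{k}\}\subset S^{1}$ shows $S_{V^{\ast}}$ has cofinite index $\le k$ in $S$. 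Since $V^{\ast}$ belongs to the forward orbit of $U$, this is the claim.

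The finiteness of $\mathcal{O}(U)$ and the index count are routine. The step I expect to cost the most care is verifying that $V\mapsto V_{f}$ really is a left action of $S^{1}$, that is, $(V_{f})_{g}=V_{g\circ f}$; this rests on forward invariance of $F(S)$ and on distinct Fatou components being disjoint, and in the transcendental case one should keep in mind the Bergweiler--Rohde phenomenon \cite{ber} that $f(V)$ may omit a single asymptotic value of its host component, although this is harmless here since only $f(V)\subseteq V_{f}$ is used. The other point to get right is conceptual rather than computational: because $S^{1}$ is only a monoid and not a group, the stabilizer of the \emph{given} component $U$ need not be cofinite --- indeed it may be empty, when $U$ is strictly pre-periodic --- so one genuinely has to pass to a component $V^{\ast}$ whose orbit is strongly connected, which is precisely what the minimal-cardinality choice delivers.
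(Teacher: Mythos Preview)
Your proof is correct and follows the same strategy as the paper's: use finiteness of the forward orbit to locate a component $V^{\ast}$ whose own forward orbit has minimal cardinality, then show that for every $f\in S$ one of finitely many fixed $g_{j}\in S^{1}$ satisfies $g_{j}\circ f\in S_{V^{\ast}}$. The paper presents this as a two-case split (Case~(i) when $U$ itself already works, Case~(ii) passing to a component with fewest forward images), whereas you unify both cases by formalizing $V\mapsto V_{f}$ as a left monoid action of $S^{1}$ and are more explicit about the identity $(V_{f})_{g}=V_{g\circ f}$ and about why $V^{\ast}$ lands in $\{U_{f}:f\in S\}$ rather than merely $\{U_{f}:f\in S^{1}\}$; the underlying argument is the same.
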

\begin{proof}
If $ S $ is a rational semigroup, the proof see for instance in {\cite[Theorem 6.1]{hin}}.
 
If $ S $ is a transcendental semigroup, we sketch our proof in the following way.

We have given that $ U $ be a non-wandering component of Fatou set $ F(S) $. So  $ U $ has a finite forward orbit  $ U_{1}, U_{2}, \ldots , U_{n} $ (say) with $ U_{1} = U $. \\
Case (i): If for every $ i =1,2,\ldots n $,  there is $ f_{i} \in S $  such that $ f_{i}(U_{i})\subseteq U_{1} $, then by the above lemma \ref{ss1},   stabilizer  $S_{U_{1}} = \{f\in S : U_{1}{_{f}} = U_{1}\}$ is a subsemigroup of $ S $. For any $ f \in S $ there is $ f_{i} $ for each $i =1,2, \dots, n $ such that $ U_{1_{f_{i}\circ f}} = U_{1}$.  Which shows that $ f_{i} \circ f \in S_{U_{1}} $. Therefore, $ U_{1} $ is a required stable basin such that the stabilizer  $S_{U_{1}}$ has cofinite index in $ S $. \\
Case (ii): If for every $ j = 2,\ldots n $,  there is $ f_{j} \in S $  such that $ f_{j}(U_{j})\subseteq V $, where $ V = U_{j} $ such that $ j\geq 2 $, then number of components of forward orbits of $ V $ is strictly less than $ U $. By this way, we can find a component $ W = U_{i} $ for some $ 1\leq i \leq n $ whose forward orbit has fewest components. 
For every component $ W_{g} $ of the forward orbit of $ W $, there is a $ f\in S $ such that $ f(W_{g})\subseteq W $. That is,  $ W_{g \circ f} = W $, and it follows that $ W $ is a required stable basin  such that the stabilizer $S_{W}$ has cofinite index.
\end{proof}
Note that in our forth coming study, the subsemigroup $ S_{U} $ of cofinite index in $ S $ is replaced by  a basin $ U $ of cofinite index. In this sense, above
theorem \ref{ss2} is a criterion to have a basin $ U $ of cofinite index. 
Similar to rational semigroups {\cite[Conjectures 6.1 and 6.2]{hin}}, we have hoped that the following analogous two statements will also be conjectures in the case of transcendental semigroup $ S $. Note that each is true in classical complex dynamics (rational and transcendental).  
\begin{conj}
Let $ S $ be a (finitely generated) transcendental semigroup such that $ F(S) \neq \emptyset $. Then  a stable basin $ U $  has cofinite index in $ S $.
\end{conj}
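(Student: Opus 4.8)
The plan is to reduce the conjecture to the situation already covered by Theorem \ref{ss2} and then to isolate the one missing ingredient. Since $ F(S)\neq\emptyset $, pick a component $ U $ of $ F(S) $; the stabilizer $ S_{U} $ is a subsemigroup of $ S $ by Lemma \ref{ss1}, and (following the convention recorded after Theorem \ref{ss2}) the phrase ``$ U $ has cofinite index'' is to mean ``$ S_{U} $ has cofinite index in $ S $''. If \emph{some} component of $ F(S) $ has a finite forward orbit $ \{U_{f}:f\in S\} $ --- in particular, if $ S $ has no wandering domains at all --- then Theorem \ref{ss2} applies directly: among the finitely many components of that orbit choose one, $ W $, whose forward orbit has the fewest components; for each component $ W_{g} $ of the forward orbit of $ W $ there is $ f\in S $ with $ f(W_{g})\subseteq W $, so $ W_{g\circ f}=W $, whence $ S_{W} $ has cofinite index and $ W $ is the required stable basin. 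Thus the entire content of the conjecture is concentrated in the case where \emph{every} component of $ F(S) $ is a wandering domain.

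In that remaining case I would try to exploit finite generation, which is presumably why the hypothesis is stated that way. Write $ S=\langle f_{1},\ldots,f_{k}\rangle $ and consider the directed ``component graph'' whose vertices are the components of $ F(S) $ and which has, for each $ i $, an edge from $ V $ to the component containing $ f_{i}(V) $; this graph has out-degree at most $ k $. The hope is to combine this bounded branching with classical single-function transcendental dynamics --- each $ F(f_{i})\supseteq F(S) $, and by Bergweiler--Rohde \cite{ber} each $ f_{i} $ loses at most one point in passing from $ U $ to $ U_{f_{i}} $ --- to locate a vertex $ W $ from which a cofinite-index subfamily of $ S $ returns into $ W $. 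A sensible first target is the cyclic case $ S=\langle f\rangle $: there the assertion reduces to ``$ F(f)\neq\emptyset $ forces a periodic Fatou component'', whose period $ p $ then yields $ S_{U}=\langle f^{p}\rangle $ of cofinite index $ p $; the general finitely generated case should then be assembled from this together with an analysis of how the other generators act on the periodic cycle of components.

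The main obstacle --- and precisely the reason this is offered as a conjecture rather than a theorem --- is that, in contrast to the rational setting, a transcendental semigroup can genuinely carry wandering domains, so the minimality-of-forward-orbit argument underlying Theorem \ref{ss2} is unavailable, and it is not clear that non-emptiness of $ F(S) $ forces the existence of any periodic, let alone stable, component. Overcoming this would require either a transcendental analogue of the Hinkkanen--Martin proof of \cite[Conjectures 6.1 and 6.2]{hin} that does not route through finiteness of orbits, or a structural input guaranteeing a periodic component whenever $ F(S)\neq\emptyset $. I would treat the cyclic case as the decisive test and attempt it first.
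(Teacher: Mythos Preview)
The statement you are addressing is presented in the paper as a \emph{conjecture}, not a theorem: the paper gives no proof whatsoever, merely recording it (together with the companion Conjecture about forward orbits) as the transcendental analogue of Hinkkanen--Martin's rational-semigroup conjectures and noting that the cyclic case is classical. There is therefore no ``paper's own proof'' against which to measure your proposal.

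Your write-up is honest about this: it is a strategy memo rather than a proof, and you correctly isolate the non-wandering case as already handled by the argument behind Theorem~\ref{ss2} and the wandering case as the genuine obstruction. That diagnosis matches the paper's own framing. Two cautions, however. First, your reading of the conjecture as an \emph{existence} statement (``some stable basin has cofinite index'') may not be the intended one; the parallel with the second conjecture and the remark that the classical case is known suggest the reading ``\emph{any} stable basin $U$ has $S_U$ of cofinite index'', which is a rather different problem and does not obviously reduce to finding a periodic component. Second, your proposed ``decisive test'' in the cyclic case --- that $F(f)\neq\emptyset$ forces a periodic Fatou component --- is false for transcendental entire $f$: there are examples whose Fatou set is non-empty and consists entirely of wandering domains, so if the existence reading were intended the conjecture would already fail for $S=\langle f\rangle$. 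This reinforces that either the statement should be read in the ``every stable basin'' sense, or the conjecture needs additional hypotheses; in neither case does your outline close the gap, which is consistent with the paper leaving it open.
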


\begin{conj}
Let $ S $ be a finitely generated transcendental semigroup. Then for each component $ U $ of $ F(S) $, there is a stable basin $ V $ for $ S $ lying in the forward orbit of $ U $  has  cofinite index in $ S $.
\end{conj}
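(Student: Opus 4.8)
The plan is to reduce the statement to Theorem~\ref{ss2}, which already delivers exactly this conclusion — a stable basin lying in the forward orbit of $U$ whose stabilizer has cofinite index in $S$ — under the single additional hypothesis that $S$ has no wandering domains. Hence the whole content of the conjecture is to show that finite generation forces every component of $F(S)$ to be non-wandering, i.e.\ that for each component $U$ the forward orbit $\{U_{f}: f\in S\}$ consists of only finitely many distinct components. Once this is known, Theorem~\ref{ss2} applies verbatim and produces the required stable basin $V$ (one of the finitely many $U_{i}$) with $S_{V}$ a subsemigroup of $S$ of cofinite index by Lemma~\ref{ss1}.

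First I would fix a component $U$ of $F(S)$ and argue by contradiction, supposing its forward orbit contained infinitely many distinct components. By Proposition~\ref{ts1} every $f\in S$ is a finite composition of the generators $f_{1},\dots,f_{m}$, so from such an infinite orbit one extracts a composition sequence $g_{k}=f_{\alpha_{1}}\circ\cdots\circ f_{\alpha_{k}}$ carrying $U$ onto pairwise distinct Fatou components — precisely a wandering phenomenon in the sense of Definition~\ref{1g}. I would then import the classical no-wandering machinery under the restriction that the generators are transcendental entire maps of finite type (the setting already used for the abelian case in~\cite{poo}): in that case the combined singular set of $S$, namely the closure of the union of the critical and asymptotic values of all $f\in S$, stays finite, because a composition of finitely many finite-type maps is again of finite type with singular set controlled by the generators. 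This finiteness is the algebraic-to-analytic input that powers a Sullivan-type quasiconformal-surgery argument: a genuine wandering domain would support a nontrivial invariant Beltrami differential, contradicting finite dimensionality of the relevant deformation space. The resulting contradiction shows the forward orbit of $U$ is finite, and the reduction in the first paragraph then closes the proof.

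The hard part is exactly this no-wandering step, which is why the statement is posed as a conjecture rather than a theorem. For a \emph{single} transcendental entire map the no-wandering-domains theorem already fails in general (Baker's examples), so one genuinely needs a finite-type restriction on the generators together with a \emph{semigroup} version of Sullivan's theorem; the literal statement for arbitrary finitely generated transcendental $S$ appears to be out of reach for this reason. The real obstacle is that $S$ contains infinitely many maps simultaneously: the deformation must be compatible with \emph{every} generator at once, so its Beltrami coefficient must be invariant under each $f_{i}$ and not merely under one map, and one must control the grand orbits of all singular values of the whole semigroup together. Verifying that the combined singular set remains finite and that a single $S$-invariant Beltrami differential can be assembled for the entire semigroup is the step that is not available in the present generality; establishing it — most plausibly first in the abelian finite-type case of~\cite{poo} and then relaxing commutativity — is where the main work and the main difficulty lie.
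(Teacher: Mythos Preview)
The statement you are addressing is explicitly labelled a \emph{conjecture} in the paper and carries no proof there; the authors merely say they ``have hoped'' the analogous rational-semigroup statements of Hinkkanen--Martin extend to the transcendental setting. So there is no paper proof to compare against.

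Your proposal is not a proof either, and you essentially say so yourself. The reduction to Theorem~\ref{ss2} is correct: the entire content of the conjecture is a no-wandering-domains theorem for finitely generated transcendental semigroups. But that theorem is not available. As you note, already for a single transcendental entire map wandering domains exist (Baker), so the conjecture as literally stated --- with no finite-type hypothesis on the generators --- is almost certainly false, not merely hard. Your sketch silently imports the extra assumption that the generators are of finite type, which is not part of the conjecture; even granting that, the Sullivan-type surgery you invoke for a whole semigroup (an $S$-invariant Beltrami coefficient compatible with all generators simultaneously, control of the grand orbit of the combined singular set) is precisely the missing ingredient, and you do not supply it. What you have written is a clear diagnosis of where the difficulty lies and a plausible program under strengthened hypotheses, but it should be presented as such rather than as a proof proposal.
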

 Note that there are examples of holomorphic semigroups whose subsemigroups  have cofinite index but not have finite index. For example, for any $ f \in S $ (holomorphic semigroup), the sets $ S \circ f = \{g \circ f: g \in S\} $  and $ f \circ S = \{f \circ g: g \in S\} $  are subsemigroups (in fact, left and right ideals (see for instance in {\cite[Definition 2.2 and Proposition 2.1]{sub2}})) of $ S $. Each of these subsemigroups has cofinite index 1 in $ S $ but not have finite index in $ S $.There may be a lot of  subsemigroups of holomorphic semigroup $ S $ that may have finite index in $ S $. If we able to find such subsemigroups, there will be a chance of replacing cofinite index by finite index and so further investigations will be more interesting. We leave this notion of investigation for future research. 
 
From the example just we mentioned in above paragraph, we can also say that $ S \circ f $  and  $ f \circ S$ are not finitely generated even if the semigroup $ S $ is. For if $ S \circ f = \langle f_{1}, f_{2}, \ldots, f_{n} \rangle$
where $ f_{i} \in S$ for $ i=1,2,\ldots n $, then $ f_{i}=  g_{i} \circ f$, where $ g_{i} \in S $. For any $ g \in S $, we have $ g^{n}\circ f \in S\circ f $ for all $ n\geq 1 $ but not every $ g^{n}\circ f \in \langle f_{1}, f_{2}, \ldots, f_{n} \rangle$.  This fact together with discussion in above paragraph, we came to know that the notion of cofinite index fails to preserve basic finiteness (finitely generated) condition of subsemigroup. That is, if $ T $ is a subsemigroup of cofinite index in semigroup $ S $, then $ S $ is finitely generated may not always imply $ T $ is finitely generated. There is another notion of index which preserves finiteness condition of subsemigroup. 
\begin{dfn}[\textbf{Rees index}]\label{ri}
Let $ S $ be a semigroup and $ T $ be its subsemigroup. The Rees index of $ T $ in $ S $ is defined as $ |S-T| +1 $. In this case, we say $ T $ is large subsemigroup of $ S $ and $ S $ is small extensions of $ T $. 
\end{dfn}

This definition was first introduced by A. Jura \cite{jur} in the case when $ T $ is an ideal of semigroup $ S $. In such a case, the Rees index of $ T $ in $ S $ is the cardinality of factor semigroup $ S/T $. From the definition \ref{ri}, it is clear that the Rees index of $ T $ in $ S $ is the size of the compliment $ S-T $. To have Rees index of any subsemigroup in its parent semigroup is fairly a restrictive property, and it occurs naturally in semigroups ( for instance all ideals in additive semigroup of positive integers are of finite Rees index). Note that Rees index does not generalize group index, even the notion of finite Rees index does not generalize finite group index. That is, if $ G $ is an infinite group and $ H $ is proper subgroup, then group index of $ H $ in $ G $ is  finite but Rees index is infinite. In fact, let $ G $ be an infinite group and $ H $ is a subgroup of $ G $, then $ H $ has finite Rees index in $ G $ if and only if $ H = G $. 

Next, we investigate how much similar a  semigroup $ S $ and its large subsemigroup $ T $ are. One of the basic similarity (proved first by Jura \cite{jur}) is the following result. 
\begin{theorem}\label{ri1}
Let $ T $ be a large subsemigroup of semigroup $ S $. Then $ S $ is finitely generated if and only if $ T $ is. 
\end{theorem}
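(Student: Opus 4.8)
The plan is to treat the two implications separately. Write $D = S \setminus T$; by definition of Rees index, $D$ is finite. The implication ``$T$ finitely generated $\Rightarrow$ $S$ finitely generated'' is a triviality: if $T = \langle B \rangle$ with $B$ finite, then $B \cup D$ is a finite subset of $S$, and $\langle B \cup D \rangle$ contains both $\langle B\rangle = T$ and the set $D$, hence contains $T \cup D = S$; the reverse inclusion is obvious, so $S = \langle B \cup D\rangle$.

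The substantial direction — essentially Jura's original result — is ``$S$ finitely generated $\Rightarrow$ $T$ finitely generated''. I would fix a finite generating set $A$ of $S$ and, for each $t \in T$, a word over $A$ of shortest possible length representing $t$, whose length I call $\ell(t)$. Then I would show that $T$ is generated by
\[
 B \;=\; (A\cap T)\ \cup\ \bigl(DA\cap T\bigr)\ \cup\ \mathcal S\ \cup\ \bigl(D\mathcal S\cap T\bigr),
 \qquad
 \mathcal S \;=\; \{\, t\in T : \ell(t)\le |D|+1 \,\},
\]
where $DA = \{da : d\in D,\ a\in A\}$ and $D\mathcal S = \{ds : d\in D,\ s\in\mathcal S\}$. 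Each piece is finite — in particular $\mathcal S$ is finite because there are only finitely many words of bounded length over the finite alphabet $A$ — so $B$ is a finite subset of $T$. The claim ``$t\in\langle B\rangle$ for every $t\in T$'' would then be proved by induction on $\ell(t)$.

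For the inductive step, take $t\in T$ with minimal representing word $t = a_1a_2\cdots a_n$, $n=\ell(t)$. If $n\le |D|+1$ then $t\in\mathcal S\subseteq B$, so assume $n\ge |D|+2$. Reading from the right, $t = (a_1\cdots a_{n-1})\,a_n$: if $a_n\in T$ then $a_n\in A\cap T\subseteq B$ and the prefix $a_1\cdots a_{n-1}$ represents either an element of $T$ (handled by induction, its minimal length being $<n$) or an element of $D$ (so $t\in DA\cap T\subseteq B$). If $a_n\in D$, I would instead look along the suffixes $a_j\cdots a_n$; since the full suffix $t$ lies in $T$ and the last one $a_n$ lies in $D$, there is a largest $p$ with $a_p\cdots a_n\in T$, and the $n-p$ suffixes $a_{p+1}\cdots a_n,\dots,a_n$ all lie in $D$. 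The key point is that minimality of $n$ forces $n-p\le |D|$: otherwise two of these suffixes coincide as elements of $S$, say $a_i\cdots a_n=a_j\cdots a_n$ with $i<j$, whence $a_1\cdots a_{i-1}a_j\cdots a_n$ is a strictly shorter word for $t$ — a contradiction. Hence $a_p\cdots a_n\in\mathcal S$; if $p=1$ this is just $t\in\mathcal S\subseteq B$, and if $p\ge 2$ we write $t=(a_1\cdots a_{p-1})(a_p\cdots a_n)$ and split once more on whether $a_1\cdots a_{p-1}$ represents an element of $T$ (induction applies, since its length is $p-1<n$, and $a_p\cdots a_n\in\mathcal S\subseteq B$) or of $D$ (then $t\in D\mathcal S\cap T\subseteq B$). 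This exhausts all cases and completes the induction.

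The step I expect to be the real obstacle is precisely the one the choice of $B$ is engineered to overcome: because $T$ is only a subsemigroup, not an ideal, a factorization $t=uv$ of an element $t\in T$ need not have either factor in $T$, so one cannot simply chop a word for $t$ at the places where a prefix lands in $T$ and read off generators. Two devices rescue the argument — the pigeonhole/pumping bound above, which caps any run of consecutive $D$-valued suffixes at length $|D|$ in a minimal word, and the absorption of the finitely many possible ``boundary'' elements of $D$ into the adjacent letter or short block, which is what forces $DA\cap T$ and $D\mathcal S\cap T$ into $B$. Arranging $B$ to be at once finite and large enough to absorb every one of these cases is the delicate part; once it is in place the induction is routine.
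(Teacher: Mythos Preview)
The paper does not actually prove this theorem: its entire ``proof'' is the single line ``See for instance {\cite[Theorem 1.1]{rus}}'', deferring to Ruskuc's paper. So there is nothing in the paper to compare your argument against beyond the bare citation.

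Your proof is correct and is essentially the classical combinatorial argument that underlies Jura's and Ruskuc's results. The trivial direction is handled exactly as it must be. For the substantial direction your choice of generating set $B$ works: the pigeonhole bound on runs of $D$-valued suffixes in a minimal word is the standard ``pumping'' step, and the inclusion of the boundary sets $DA\cap T$ and $D\mathcal S\cap T$ is precisely what absorbs the cases where a factor falls outside $T$. The induction then closes cleanly. One small redundancy: $A\cap T\subseteq\mathcal S$, so that piece of $B$ is superfluous --- but this is cosmetic. Since the paper offers no proof of its own, your write-up is strictly more than what the paper contains here.
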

\begin{proof}
See for instance {\cite[Theorem 1.1]{rus}}. 
\end{proof}
On the basis of this theorem \ref{ri1}, we proof the following dynamical similarity of holomorphic semigroup and its subsemigroup.
\begin{theorem}\label{ri2}
Let $ T $ be a large subsemigroup of finitely generated holomorphic semigroup $ S $. Then $ F(S)  =F(T) $ and $ J(S) = J(T).  $
\end{theorem}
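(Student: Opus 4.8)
The plan is to obtain the two inclusions $F(S)\subseteq F(T)$ and $F(T)\subseteq F(S)$ separately; only the second uses the hypothesis. The first inclusion, together with the complementary statement $J(T)\subseteq J(S)$, is already available for \emph{every} subsemigroup by Lemma \ref{fi3} and needs no largeness. So the whole task reduces to proving $F(T)\subseteq F(S)$, equivalently $J(S)\subseteq J(T)$. The point where the hypothesis enters is simply the reading of Definition \ref{ri}: ``$T$ large in $S$'' means precisely that the set $S\setminus T=\{h_{1},h_{2},\ldots,h_{m}\}$ is finite, so that $S=T\cup\{h_{1},\ldots,h_{m}\}$ is the union of the subsemigroup $T$ and finitely many extra holomorphic maps.

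Next I would argue pointwise. Fix $z\in F(T)$ and choose a neighbourhood $U$ of $z$ on which the family $T$ is normal; since each $h_{i}$ is entire (in the transcendental case) or is a holomorphic self-map of $\mathbb{C}_{\infty}$ (in the rational case), each $h_{i}$ is in particular holomorphic on $U$, and $U$ may be shrunk if convenient. Let $(g_{j})_{j\in\mathbb{N}}$ be an arbitrary sequence in $S$. Because $S=T\cup\{h_{1},\ldots,h_{m}\}$, one of two things happens: either infinitely many terms $g_{j}$ lie in $T$, or, by the pigeonhole principle, infinitely many terms $g_{j}$ coincide with one fixed $h_{i}$. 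In the first case, normality of $T$ on $U$ yields a subsequence of those $T$-terms converging locally uniformly on $U$ (to a holomorphic limit or to $\infty$); in the second case the constant subsequence $g_{j_{k}}=h_{i}$ already converges locally uniformly on $U$. Either way $(g_{j})$ has a subsequence that is locally uniformly convergent or divergent on $U$, so $S$ is normal on $U$ and $z\in F(S)$. Hence $F(T)\subseteq F(S)$, and combining with Lemma \ref{fi3} gives $F(S)=F(T)$; taking complements in $\mathbb{C}$ (or in $\mathbb{C}_{\infty}$) gives $J(S)=J(T)$.

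I do not expect a serious obstacle: the argument is the standard ``a finite union of normal families is normal'' phenomenon transplanted to semigroups. The only points needing care are (i) confirming that normality of $S$ at a point can be tested on plain sequences of elements of $S$ (a composition sequence generated by the generators is, in particular, such a sequence), and (ii) using a single neighbourhood $U$ simultaneously for $T$ and for each of the finitely many $h_{i}$, which is harmless because there are only finitely many. It is worth noting that finite generation of $S$ — and hence of $T$, by Theorem \ref{ri1} — is not actually needed for this dynamical conclusion: finiteness of $S\setminus T$ alone suffices, and Theorem \ref{ri1} is quoted only to record that the class of large subsemigroups behaves well with respect to the finiteness conditions of this section.
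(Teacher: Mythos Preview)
Your argument is correct, and it is genuinely simpler than the paper's. The paper proceeds via Theorem~\ref{ri1}: it first deduces that $T$ is finitely generated, fixes a generating set $X=\{f_{1},\ldots,f_{n}\}$ for $T$, observes that $S$ is generated by $X\cup(S\setminus T)$, and then tries to express an arbitrary $g_{m}\in S$ as a word of the form $f_{i_{1}}\circ\cdots\circ f_{i_{n}}\circ h_{j_{1}}\circ\cdots\circ h_{j_{k}}$ with the $h$'s taken from the finite set $S\setminus T$; the convergence of a subsequence is then argued by ``finitely extending'' a convergent subsequence from $F(T)$. Your route bypasses all of this: you use only the set-theoretic decomposition $S=T\cup\{h_{1},\ldots,h_{m}\}$ and the pigeonhole principle, reducing the problem to the elementary fact that adjoining finitely many holomorphic maps to a normal family keeps it normal. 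As you note, this makes Theorem~\ref{ri1} and the finite-generation hypothesis on $S$ superfluous for the dynamical conclusion; the paper's approach, by contrast, leans on that hypothesis to obtain the generator description, and its factorisation step (writing every element of $S$ as a $T$-word followed by an $(S\setminus T)$-word) is left somewhat imprecise.
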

\begin{proof}
We prove $ F(S) = F(T) $, another one is clear by taking compliment.
By the lemma \ref{fi3}, it is clear that $ F(S) \subset F(T) $. So, we prove only $ F(T) \subset F(S) $. By above theorem \ref{ri1}, $ T $ is finitely generated and let $ X = \{f_{1},  f_{2}, \ldots , f_{n} \} \subset S$  be a generating set of $ T $. Then clearly $ S $ is generated by the set $ Y = X \cup (S-T) $.  Every sequence $ (f_{i}) $ in $ F(T) $ where  $ f_{i} = f_{i_{1}} \circ f_{i_{2}} \circ \ldots \circ f_{i_{n}}$ has a convergent subsequence.  Now each element $ g_{m} $ of a sequence $ (g_{m}) $ in $ S $ can be written as  $ g_{m} = f_{i_{1}} \circ f_{i_{2}} \circ \ldots \circ f_{i_{n}}  \circ h_{j_{1}} \circ h_{j_{2}} \circ \ldots \circ h_{j_{k}}$, where $ S - T = \{h_{1},  h_{2}, \ldots , h_{k} \} \subset S$. Since $ S-T $ is finite, so convergent sequence in $ F(T) $ can be finitely extended to the convergent sequence in $ F(S) $. So, every sequence  $ (g_{m}) $ in $ F(S) $ has a convergent subsequence. Hence $ F(T) \subset F(S) $. 
\end{proof}

\section{Proof of the Theorem \ref{fin1}}
In this section, we concentrate on our mission of proving theorem \ref{fin1}.
We first prove the  analogous result of lemma \ref{fi3} in the case of escaping set of transcendental semigroup.  
\begin{lem}\label{fi4}
For any subsemigroup $ T $ of a transcendental  semigroup $ S $, we have   $I(S) \subset I(T) $.
\end{lem}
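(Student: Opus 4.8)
The plan is to mirror the proof of Lemma \ref{fi3}, replacing the Fatou set by the escaping set and invoking the escaping-set part of {\cite[Theorem 3.1]{sub1}} in place of its Fatou-set part. Concretely, recall that by {\cite[Theorem 3.1]{sub1}} one has $I(S) \subset \bigcap_{f \in S} I(f)$, and that, directly from Definition \ref{2ab}, a point $z$ lies in $I(T)$ precisely when the transcendental semigroup $T$ is iteratively divergent at $z$, i.e. when $g^{n}(z) \to \infty$ as $n \to \infty$ for every $g \in T$; equivalently $I(T) = \bigcap_{g \in T} I(g)$.

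First I would fix an arbitrary point $z \in I(S)$. By the inclusion $I(S) \subset \bigcap_{f \in S} I(f)$ we get $f^{n}(z) \to \infty$ as $n \to \infty$ for every $f \in S$. Next, since $T$ is a subsemigroup of $S$, every $g \in T$ is in particular an element of $S$, so the previous line applies to $g$ and yields $g^{n}(z) \to \infty$ as $n \to \infty$ for every $g \in T$. Hence $T$ is iteratively divergent at $z$, which by Definition \ref{2ab} means $z \in I(T)$. Since $z \in I(S)$ was arbitrary, this proves $I(S) \subset I(T)$.

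Equivalently, the whole argument can be compressed into the monotonicity chain $I(S) \subset \bigcap_{f \in S} I(f) \subset \bigcap_{g \in T} I(g) = I(T)$, where the middle inclusion holds simply because intersecting over the smaller index set $T \subseteq S$ can only enlarge the resulting set. I do not anticipate any genuine obstacle here: the only point that needs a line of care is the passage from ``$z$ escapes under every $f \in S$'' to ``$z$ escapes under every $g \in T$'', which is immediate from $T \subseteq S$, so no transcendental-dynamics input beyond {\cite[Theorem 3.1]{sub1}} and the definition of iterative divergence is needed.
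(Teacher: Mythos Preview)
Your proposal is correct and follows essentially the same route as the paper's own proof: invoke the escaping-set part of \cite[Theorem 3.1]{sub1} to get $I(S)\subset\bigcap_{f\in S}I(f)$, use $T\subseteq S$ to pass to $\bigcap_{g\in T}I(g)$, and then conclude $z\in I(T)$ via Definition~\ref{2ab}. The only cosmetic difference is that you state the equality $I(T)=\bigcap_{g\in T}I(g)$ up front, whereas the paper unpacks this step pointwise at the end.
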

\begin{proof}

By the theorem 3.1 of \cite{sub1}, $ I(S) \subset \cap_{f \in S } I(f) $ and $ I(T) \subset \cap_{g \in T} I(g) $ for any subsemigroup $ T $ of semigroup $ S $. Since any $ g \in T $ is also in $ S $, so by the same theorem 3.1 of \cite{sub1}, we also have $ I(S) \subset I(g) $ for all $ g \in T $ and hence $ I(S) \subset \cap_{g \in T } I(g) $. Now for any $ z \in I(S) $, we have $ z \in \cap_{g \in T } I(g) $  for all $ g \in T $. This implies $ z \in I(g) $ for all $ g \in T $. By the definition \ref{2ab}, we have $ g^n(z) \to \infty $ as $ n \to \infty $ for all  $ g \in T $. This proves $ z \in I(T) $ and hence $ I(S) \subset I(T) $.

\end{proof}

\begin{lem}\label{3}
Let $S$ be a transcendental  semigroup. Then
\begin{enumerate}
\item $int(I(S))\subset F(S)\;\ \text{and}\;\ ext(I(S))\subset F(S) $, where $int$ and $ext$ respectively denote the interior and exterior of $I(S)$.  
 \item $\partial I(S) = J(S)$, where $\partial I(S)$ denotes the boundary of $I(S)$. 
\end{enumerate}
\end{lem}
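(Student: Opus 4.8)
The plan is to transplant Eremenko's theorem $J(f)=\partial I(f)$ for a single transcendental entire function to the semigroup setting. Two relations do all the linking. On the one hand $I(S)\subset\bigcap_{f\in S}I(f)$ (Theorem 3.1 of \cite{sub1}), and conversely, by the very definition of $I(S)$ adopted here, $z\in I(S)$ as soon as $f^{n}(z)\to\infty$ for every $f\in S$, so in fact $I(S)=\bigcap_{f\in S}I(f)$; on the other hand $F(S)\subset\bigcap_{f\in S}F(f)$ (same reference). The whole proof is organised around the partition $\mathbb{C}=\mathrm{int}\,I(S)\sqcup\partial I(S)\sqcup\mathrm{ext}\,I(S)$: once part (1) is in hand, part (2) will follow from it together with the inclusion $\partial I(S)\subset J(S)$.

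For the first half of (1), let $D$ be an open disc with $D\subset I(S)$. Then $D\subset I(f)$ for every $f\in S$, and since $D$ is open, $D\subset\mathrm{int}\,I(f)$; because $\mathrm{int}\,I(f)$ is disjoint from $\partial I(f)=J(f)$ by Eremenko's theorem applied to the single map $f$, we get $D\subset F(f)$, hence $D\subset\bigcap_{f\in S}F(f)$. To conclude $D\subset F(S)$ I would use that the interior of $\bigcap_{f\in S}F(f)$ is exactly $F(S)$ — equivalently $J(S)=\overline{\bigcup_{f\in S}J(f)}$ — which for a transcendental semigroup is provided by the blow-up property of $J(S)$: a point of $J(S)$ lying interior to $I(S)$ would have, inside a neighbourhood contained in $I(S)$, a point carried by some iterate of a generator onto a repelling periodic point of that generator, that is, onto a non-escaping point, a contradiction. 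Either way, $\mathrm{int}\,I(S)\subset F(S)$.

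For the second half of (1), a point $z_{0}\in\mathrm{ext}\,I(S)$ has an open neighbourhood $W$ disjoint from $I(S)$, hence disjoint from $\overline{I(S)}$; by the density of escaping points in the Julia set, $J(S)\subset\overline{I(S)}$, so $W\cap J(S)=\emptyset$ and $z_{0}\in F(S)$. Now (2): by (1), $\mathrm{int}\,I(S)\cup\mathrm{ext}\,I(S)\subset F(S)$, so $J(S)=\mathbb{C}\setminus F(S)\subset\partial I(S)$. For the reverse inclusion, suppose $z_{0}\in\partial I(S)\cap F(S)$ and let $V$ be the Fatou component of $S$ containing $z_{0}$; since $z_{0}\in\overline{I(S)}$ and $V$ is an open neighbourhood of it, pick $z_{1}\in V\cap I(S)$. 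For each $f\in S$, $V\subset F(S)\subset F(f)$ puts $V$ inside a Fatou component $V_{f}$ of $f$ on which $(f^{n})$ is normal; since $f^{n}(z_{1})\to\infty$ and any locally uniform limit of a subsequence of $(f^{n})$ on $V_{f}$ must then be identically $\infty$ (an entire-function limit in the spherical metric that equals $\infty$ somewhere is $\equiv\infty$), we get $f^{n}\to\infty$ locally uniformly on $V_{f}$, so $V\subset I(f)$. As this holds for every $f\in S$, $V\subset\bigcap_{f\in S}I(f)=I(S)$, whence $z_{0}\in\mathrm{int}\,I(S)$, contradicting $z_{0}\in\partial I(S)$. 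Therefore $\partial I(S)=J(S)$.

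The genuinely substantive ingredients — and the place where the main difficulty lies — are the two facts about $J(S)$ invoked above that are not mere set manipulations: the identity $J(S)=\overline{\bigcup_{f\in S}J(f)}$ (equivalently, the blow-up property of $J(S)$), used to finish the interior case, and above all the density $J(S)\subset\overline{I(S)}$ of escaping points in the Julia set, used in the exterior case. For a single transcendental entire function the latter is exactly Eremenko's theorem; for a semigroup, where $I(S)=\bigcap_{f\in S}I(f)$ may be comparatively thin, establishing it — or pinning down the hypotheses on $S$ under which it holds — is the delicate step, and it should be reached by combining the Fatou-Julia-Eremenko theory of the individual generators with the invariance properties of $J(S)$ and $I(S)$. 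Everything else reduces to the partition of the plane and elementary normal-family arguments.
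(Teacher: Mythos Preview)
The paper supplies no argument of its own here: its entire proof is the sentence ``We refer for instance lemma 4.2 and theorem 4.3 of \cite{kum2}.'' So there is nothing internal to compare against; what you have written is in effect a reconstruction of the argument behind that citation, and your overall architecture --- prove (1), deduce $J(S)\subset\partial I(S)$ from the partition $\mathbb{C}=\mathrm{int}\,I(S)\sqcup\partial I(S)\sqcup\mathrm{ext}\,I(S)$, then prove $\partial I(S)\subset J(S)$ by a normal-family argument on a Fatou component meeting $I(S)$ --- is the standard route and matches what the cited source does.

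Your proof of $\partial I(S)\subset J(S)$ is complete and correct, and your reduction of the two halves of (1) to the two substantive inputs $J(S)=\overline{\bigcup_{f\in S}J(f)}$ and $J(S)\subset\overline{I(S)}$ is exactly right; you are also right to flag the latter as the delicate point, since $I(S)=\bigcap_{f}I(f)$ can be much thinner than each $I(f)$ and Eremenko's classical $J(f)\subset\overline{I(f)}$ does not by itself yield $J(f)\subset\overline{I(S)}$. One sharpening of your interior case: once $D\subset I(S)$ is open you have $D\subset I(f)$ and hence $D\subset\mathrm{int}\,I(f)$ for every $f\in S$, so $D\cap\partial I(f)=D\cap J(f)=\emptyset$ for every $f$, and then $J(S)=\overline{\bigcup_{f\in S}J(f)}$ gives $D\cap J(S)=\emptyset$ directly. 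This is cleaner than the blow-up/repelling-periodic-point sketch you give, which as phrased (``carried \ldots onto a non-escaping point, a contradiction'') implicitly uses forward invariance of $I(S)$ under $S$; that invariance is not available for general (non-abelian) $S$, whereas the argument via $D\cap J(f)=\emptyset$ needs nothing of the sort.
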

\begin{proof}
 We refer for instance lemma 4.2 
and  theorem 4.3  of \cite{kum2}.
\end{proof}
Note that this lemma \ref{3} is a extension of Eremenko's  result \cite{ere},  $\partial I(f) = J(f)$ of classical transcendental dynamics to more general  semigroup settings. 
\begin{proof}[Proof of the Theorem \ref{fin1}]
We prove $ I(S) = I(T) $.  The proof of $J(S) = J(T) $ is obvious from above lemma \ref{3} (2). The fact $ F(S) =F(T) $ is also obvious. 
By the above lemma \ref{fi3}, we always have $ I(S) \subset I(T)   $ for any subsemigroup $ T $ of semigroup $ S $.  For proving this theorem it is enough to show the opposite inclusion $ I(T) \subset I(S) $. 

Let subsemigroup $ T $ of a semigroup $ S $ has finite index $ n $, then by the definition \ref{1fi},  there exists  finite collection of elements $ \{f_{1}, f_{2}, \ldots, f_{n} \}$ of $ S \cup \{\text{Identity}\} $ such that 
$$
S  = f_{1}\circ T \cup f_{2}\circ T\cup \ldots \cup f_{n}\circ T 
$$
Then for any $ g \in S $, there is $ h \in T $ such that $ g = f_{i} \circ h $.
Choose a sequence $ (g_{j})_{j \in \mathbb{N}} $ in $ S $, then each $ g_{j} $ is of the form  $ g_{j} = f_{i} \circ h_{j} $, where $ h_{j} \in T $,  $ 1 \leq i \leq n $. Here we may assume same $ i $ for all $ j $.  
Let $ z \in I(T) $, then by {\cite[Theorem 2.2]{sub1}}, every sequence $( h_{j})_{j \in \mathbb{N}} $ in $ T $  has a divergent subsequence $( h_{j_{k}})_{j_k \in \mathbb{N}} $. That is,  $ h_{j_{k}}^{n}(z) \to \infty $ as $ n\to \infty $ for all $ j_{k} $. 
In this case,  every sequence $(g_{j})_{j \in \mathbb{N}}$ in $ S $ have subsequence $ (g_{j_{k}})_{k \in \mathbb{N}} $, where $ g_{j_{k}} = f_{i} \circ h_{j_{k}} $  with  $ h_{j_{k}}^{n}(z) \to \infty $ as $ n \to \infty $. 
Since $ S $ is an abelian transcendental semigroup, so  $ g_{j_{k}} = f_{i} \circ h_{j_{k}}  = h_{j_{k}} \circ f_{i}$. Thus we may write $g_{j_{k}}^{n}(z)  = h_{j_{k}}^{n} ( f_{i}(z))  \to \infty $   as $ n\to \infty $. This shows that $f_{i}(z) \in I(S) $. If $ f_{i} $ = identity for particular $ i $, we are done. If $ f_{i} $ is not identity, then of course, it is an element of abelian transcendental semigroup $ S $ and  in this case, $ I(S) $ is backward invariant by {\cite[Theorem 2.6]{sub2}}.  So we must have $z \in I(S) $. Therefore, $ I(T) \subset I(S) $.   
 
Let subsemigroup $ T $ of a semigroup $ S $ has cofinite index $ n $, then by the definition \ref{1fi},  there exists  finite collection of elements $ \{f_{1}, f_{2}, \ldots, f_{n} \}$ of $ S \cup \{\text{Identity}\} $ such that for every $ f\in S $, there is  $ i\in \{1, 2, \ldots, n \} $ such that
$f_{i}\circ f \in T $. 
Let us choose a sequence $ (g_{j})_{j \in \mathbb{N}} $ in $ S $, then for each $ j $, there is a $ i $ with $ 1 \leq i \leq n $  such that $ f_{i} \circ g_{j} = h_{j} \in T $. Let $ z \in I(T) $, then by the same argument stated in the first part, every  sequence $( h_{j})_{j \in \mathbb{N}} $ in $ T $  has a divergent subsequence $( h_{j_{k}})_{j_k \in \mathbb{N}} $ at point $ z $. This follows that sequence  $( f_{i} \circ g_{j})$ has a divergent subsequence $( f_{i} \circ g_{j_{k}})$ (say)  at $ z $. Since $ S $ is abelian, so we can write $ ( f_{i} \circ g_{j_{k}})(z) = ( g_{j_{k}} \circ f_{i})(z) =   g_{j_{k}}(f_{i}(z)) = h_{j_{k}}(z)$. Now for any $ z \in I(T) $,  $ h_{j_{k}} \in T $, we must have $h_{j_{k}}^{n}(z) = g_{j_{k}}^{n}(f_{i}(z)) \to \infty$ as $ n \to \infty $. This implies that  $ f_{i}(z) \in I(S) $. If $ f_{i} $ = identity for particular $ i $, we are done. If $ f_{i} $ is not an identity, then of course, it is an element of abelian transcendental semigroup $ S $, then by the same fact explained in the last part of above paragraph, we have $ I(T) \subset I(S) $. 
\end{proof}
The condition of abelian can be loosed from the theorem \ref{fin1} if we choose Rees index. So we can generalize this  theorem to the following result. 

\begin{theorem}
If subsemigroup $ T $ of a finitely generated transcendental semigroup $ S $ has finite Rees index, then $ I(S) = I(T), J(S) =J(T)$ and $ F(S) = F(T) $.
\end{theorem}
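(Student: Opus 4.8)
The plan is to reduce everything to results already in hand. Having finite Rees index means precisely that $T$ is a \emph{large} subsemigroup of $S$ in the sense of Definition \ref{ri}, so Theorem \ref{ri1} gives that $T$ is finitely generated, and Theorem \ref{ri2} then immediately yields $F(S)=F(T)$ and $J(S)=J(T)$. Moreover $I(S)\subset I(T)$ holds for \emph{any} subsemigroup by Lemma \ref{fi4}. Hence the entire content of the statement is the reverse inclusion $I(T)\subset I(S)$, and the interest is that we obtain it \emph{without} the abelian hypothesis used in Theorem \ref{fin1}: here the finiteness of the complement $S\setminus T$ will play the role that commutativity played there.

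To prove $I(T)\subset I(S)$, fix $z\in I(T)$ and an arbitrary $f\in S$; by Definition \ref{2ab} it suffices to show $f^{n}(z)\to\infty$ as $n\to\infty$. First I would observe that some iterate of $f$ already lies in $T$: the cyclic semigroup $\langle f\rangle=\{f,f^{2},f^{3},\dots\}$ is infinite, because the iterates of a transcendental entire map are pairwise distinct (for instance by growth considerations, or because a transcendental entire map has no idempotent power), whereas $S\setminus T$ is finite; therefore all but finitely many iterates of $f$ lie in $T$, in particular $f^{m}\in T$ for some $m\in\mathbb{N}$. Since $z\in I(T)$ and $f^{m}\in T$, we get $(f^{m})^{k}(z)=f^{mk}(z)\to\infty$ as $k\to\infty$, i.e.\ $z\in I(f^{m})$. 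Now I would invoke the classical identity $I(f)=I(f^{m})$, valid for every transcendental entire map (the escaping set is unchanged on passing to iterates), to upgrade divergence along the subsequence $(f^{mk})$ to $f^{n}(z)\to\infty$ along the full sequence. Thus $z\in I(f)$ for every $f\in S$, i.e.\ $z\in I(S)$. Combined with Lemma \ref{fi4} this gives $I(S)=I(T)$, and with the $F$ and $J$ statements above the theorem follows.

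The step I expect to be the main obstacle is the justification that some iterate $f^{m}$ falls into $T$; this rests on the fact that iteration semigroups of transcendental entire maps are infinite, which itself follows from the absence of non-identity idempotents among transcendental entire functions (a constant, or a degree-one or degree-$\ge 2$ idempotent, would force the map to be the identity by the identity theorem, contradicting transcendentality). A secondary point is to cite the equality $I(f)=I(f^{m})$ in a form applicable in this setting; if one wishes to avoid it, one can instead imitate the sequence-based argument of Theorem \ref{ri2}, writing each element of $S$ as a word over a generating set $X\cup(S\setminus T)$ of $S$ with $X$ a finite generating set of $T$, but the lack of commutativity makes the bookkeeping for escaping points awkward, which is exactly why routing through iterates is the cleaner route.
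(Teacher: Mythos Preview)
Your argument is correct and takes a genuinely different route from the paper's own proof. The paper proceeds exactly as in Theorem~\ref{ri2}: it invokes Theorem~\ref{ri1} to get a finite generating set $X=\{f_1,\dots,f_n\}$ for $T$, declares that $S$ is generated by $Y=X\cup(S\setminus T)$, and then asserts that every element $g_m$ of a sequence in $S$ can be written in the form
\[
g_m=f_{i_1}\circ\cdots\circ f_{i_n}\circ h_{j_1}\circ\cdots\circ h_{j_k},\qquad h_{j_\ell}\in S\setminus T,
\]
so that a divergent subsequence for $(f_i)$ in $I(T)$ ``extends finitely'' to one for $(g_m)$ in $I(S)$. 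This mirrors the normality argument of Theorem~\ref{ri2} word for word, but the displayed factorisation (all $X$-letters to the left, all $(S\setminus T)$-letters to the right) is not justified in a non-abelian semigroup, and the paper offers no mechanism for reordering; so the step is at best heuristic.

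Your approach sidesteps this entirely. Instead of working with generating words, you exploit the two facts that really matter: (i) the cyclic semigroup $\langle f\rangle$ of any transcendental entire $f$ is infinite, so finiteness of $S\setminus T$ forces some iterate $f^{m}$ into $T$; and (ii) the classical identity $I(f)=I(f^{m})$ upgrades $z\in I(f^{m})$ to $z\in I(f)$. This is cleaner, does not require any commutation, and uses the definition $I(S)=\{z:\forall f\in S,\ f^{n}(z)\to\infty\}$ directly. The only external input beyond the paper is $I(f)=I(f^{m})$, which is standard (and your justification for the infiniteness of $\langle f\rangle$ via growth or absence of non-identity idempotents is fine). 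In short: your proof trades the paper's word-combinatorics for a pigeonhole on iterates, and thereby avoids the unjustified reordering that the paper's argument relies on.
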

\begin{proof}
The last two, that is, $ J(S) =J(T)$ and $ F(S) = F(T) $ was proved in theorem \ref{ri2} of section 2. Also, if we prove $ I(S) = I(T) $, then  $J(S) = J(T) $ is obvious from above lemma \ref{3} (2).
The fact $ I(S) \subset I(T)   $ for any subsemigroup $ T $ of semigroup $ S $ is obvious.  So we prove $ I(T) \subset I(S) $.  

By the theorem \ref{ri1}, $ T $ is finitely generated and let $ X = \{f_{1},  f_{2}, \ldots , f_{n} \} \subset S$  be a generating set of $ T $. Then clearly $ S $ is generated by the set $ Y = X \cup (S-T) $.  By  {\cite[Theorem 2.2]{sub1}},  every sequence $ (f_{i}) $ in $ T $ where  $ f_{i} = f_{i_{1}} \circ f_{i_{2}} \circ \ldots \circ f_{i_{n}}$ for each $ 1\leq i \leq n $ has a divergence subsequence  $ (f_{n_{k}}) $ at each point of $I(T) $.   Now each element $ g_{m} $ of a sequence $ (g_{m}) $ in $ S $ can be written as  $ g_{m} = f_{i_{1}} \circ f_{i_{2}} \circ \ldots \circ f_{i_{n}}  \circ h_{j_{1}} \circ h_{j_{2}} \circ \ldots \circ h_{j_{k}}$, where $S-T =  \{h_{1},  h_{2}, \ldots , h_{k} \} \subset S$ is a finite set.  This show that divergent sequence in $ I(T) $ can be extended finitely to divergent sequence in $ I(S) $. So, every sequence  $ (g_{m}) $ in $ I(S) $ has a divergent subsequence. Hence $ I(T) \subset I(S) $. 
\end{proof}

\section{Proof of the Theorem \ref{fs1}}

It is known to us that for certain holomorphic semigroups,   the Fatou set and the escaping set might be empty. In this section, we discuss notion of discontinuous semigroup. Such type of notion yields partial fundamental set and fundamental set. We prove theorem \ref{fs1} by showing partial fundamental set is in Fatou set $ F(S) $ and fundamental set is in escaping set $ I(S) $. 
\begin{dfn}[\textbf{Discontinuous semigroup}] \label{ds}
A semigroup $ S $ is said to be \textit{discontinuous} at a point $ z\in \mathbb{C} $ if there is a neighborhood  $ U $ of $ z $ such that $ f(U)\cap U =\emptyset $ for all $ f\in S $ or equivalently, translates of $ U $ by distinct elements of $ S $ ($S$-translates) are disjoint. The neighborhood $ U $ is also called nice neighborhood of $ z $. 
\end{dfn}

\begin{rem}\label{r}
Given a holomorphic semigroup $ S $, there are two natural subsets associated with $ S $.
\begin{enumerate}
\item The regular set $ R(S) $ that consists of points  $z\in \mathbb{C} $ at which $ S $ is discontinuous.
\item The limit set $ L(S) $ that consists of points  $z\in \mathbb{C} $ for which there is a point $ z_{0} $, and a sequence $\{f_{n}\}  $ of distinct elements of $ S $ such that $ f_{n}(z_{0})\rightarrow z $ as $ n \to \infty $.
\end{enumerate}
\end{rem}
A set $ X \subset\mathbb{C} $ is S-invariant or invariant under $ S $ if $ f(X) = X $ for all $ f \in S $. It is clear that both of the sets $ R(S) $ and $ L(S) $ are S-invariant. If $ U $ is a nice neighborhood, then every point of $ U $ lies in $ R(S) $. 
So  $ R(S) $ is a open set where as the set  $ L(S) $ the  a close set and $ R(S) \cap L(S) = \emptyset $. 
Recall that a set $ U $ is a \textit{partial fundamental set} for the semigroup $ S $ if
(1) $ U\neq\emptyset $,
(2)  $ U\subset R(S) $,
(3)  $ f(U)\cap U =\emptyset $ for all $ f \in S $.
If in addition to $(1)$, $(2)$ and $(3),  \;  U $ satisfies the property
(4) $\bigcup_{f\in S}f(U) = R(S)$, 
then $ U $ is called \textit{fundamental set} for $ S $. 
We say that $ x, y \in \mathbb{C} $ are S- equivalent if there is a $ f \in S $ such that $ f(x) =y $. 
Above condition $ (3) $ asserts that any two points of $ U $ are not S-equivalent under semigroup $ S $, and condition $ (4) $ asserts that every point of $ R(S) $ is equivalent to some point of $ U $. 
Note that if we replace  $ (3) $ by $f^{-1}(U)\cap U =\emptyset $ for all $ f\in S $, we say $ U $ is a backward partial fundamental set for $ S $ and if, in addition with condition $ (4) $, we say $ U $ is backward partial fundamental set. Note that  the following two theorems \ref{fs1} and \ref{fst2} hold if we have given partial backward fundamental set in the statements.   Similar to the results of Hinkkanen and Martin {\cite[Lemma 2.2]{hin}} in the case of rational semigroup,  we have prove the following in the case of transcendental semigroup $ S $.

\begin{proof}[Proof of the Theorem \ref{fs1}]
Let $ S $ is a holomorphic semigroup. $ U $ is non-empty open set and $ f(U)\cap U =\emptyset $ for all  $ f\in S $ by definition \ref{ds}. The statement $ f(U)\cap U =\emptyset $ for all $ f \in S$ implies that $ S $ omits $ U $ on $ U $. Since $ U $ is open, it contains more than two points. Then by Montel's theorem, $ S $ is normal on $ U $. So, $ U\subset F(S) $. 

Let $ S $ is a transcendental semigroup.To prove $ U\subset I(S) $, we have to show that $ f^{n}(z)\rightarrow \infty $ as $ n \rightarrow \infty $ for all $ f \in S $ and for all $ z \in U $.  The condition $ f(U)\cap U =\emptyset $ for all $ f \in S $ implies that $ f^{n}(U)\cap U =\emptyset $ because as $ f \in S $, then so $ f^{n} \in S $. Also, $ U $ is a fundamental set, so by the definition \ref{1ka} (4), we have  $\bigcup_{f\in S}f(U) = R(S)$. So by the remark \ref{r}(2), there are no points in $ U $ which appear as the limit points under distinct $ (f_{m})_{m \in \mathbb{N}} $ in $ S $. That is, $ (f_{m}) $ has divergent  subsequence $ (f_{m_{k}}) $ at each point of $ U $. Thus, by  {\cite[Theorem 2.2]{sub1}}, for any $ z \in U,  \; f^{n}(z)\rightarrow \infty$ as $ n \rightarrow \infty $ for any $ f \in (f_{m}) $. This  shows that $ U\subseteq I(S) $. 
\end{proof}
Finally, we try to generalize above result (Theorem \ref{fs1}) in the following form. We have given here a short sketch of the proof. For more detail proof, we refer {\cite[Theorem 2.1] {poo1}}.
\begin{theorem}\label{fst2}
Let $ U_{1} $ and $ U_{2} $ be two (partial) fundamental sets for transcendental semigroups $ S_{1} $ and $ S_{2} $ respectively. Suppose furthermore that $ \mathbb{C}\setminus U_{1} \subset U_{2} $ and $ \mathbb{C}\setminus U_{2} \subset U_{1} $. Then the semigroup $ S = \langle S_{1}, S_{2} \rangle $ is discontinuous, and $ U = U_{1} \cap U_{2} $ is a (partial) fundamental set for semigroup $ S $. 
\end{theorem}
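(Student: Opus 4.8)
The plan is to run the classical Klein--Maskit combination (ping-pong) argument in this setting. The two hypotheses $\mathbb{C}\setminus U_{1}\subset U_{2}$ and $\mathbb{C}\setminus U_{2}\subset U_{1}$ together say exactly that $U_{1}\cup U_{2}=\mathbb{C}$, and they also give $\mathbb{C}\setminus U_{2}=U_{1}\setminus U_{2}=:A$ and $\mathbb{C}\setminus U_{1}=U_{2}\setminus U_{1}=:B$; thus $\mathbb{C}$ splits into disjoint pieces $A\sqcup U\sqcup B$ with $U=U_{1}\cap U_{2}$, $U_{1}=A\cup U$ and $U_{2}=U\cup B$. Since each $U_{i}$ is open (it is a nice neighbourhood of each of its points), $U$ is open; and since $\mathbb{C}$ is connected while $U_{1},U_{2}$ are nonempty open sets covering it, $U\neq\emptyset$ (otherwise $\{U_{1},U_{2}\}$ would disconnect $\mathbb{C}$). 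This gives condition (1) of Definition \ref{1ka}.

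Next I would check condition (3) by feeding $U$ through a reduced word. Because $U_{i}$ is a (partial) fundamental set for $S_{i}$, every $f\in S_{1}$ has $f(U_{1})\cap U_{1}=\emptyset$, i.e.\ $f(U_{1})\subset\mathbb{C}\setminus U_{1}=B$, and symmetrically $g(U_{2})\subset A$ for every $g\in S_{2}$. Any $h\in S=\langle S_{1},S_{2}\rangle$ has a reduced form $h=h_{1}\circ\cdots\circ h_{m}$ with consecutive factors on opposite sides (grouping successive same-side factors, which is legitimate as each $S_{i}$ is a semigroup). Applying $h_{m},\dots,h_{1}$ in turn to $U\subset U_{1}\cap U_{2}$ and using $A\subset U_{1}$, $B\subset U_{2}$, $f(U_{1})\subset B$, $g(U_{2})\subset A$, an induction on $m$ shows $h(U)\subset B$ when $h_{1}\in S_{1}$ and $h(U)\subset A$ when $h_{1}\in S_{2}$ (the case $m=1$ being immediate). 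In either case $h(U)\cap U=\emptyset$, which is condition (3); and condition (3) says precisely that $U$ is a nice neighbourhood of each of its points, so $S$ is discontinuous on $U$ --- in particular $R(S)\neq\emptyset$, so $S$ is discontinuous, and $U\subset R(S)$, which is condition (2). This settles the "(partial) fundamental set" half of the theorem.

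For the full case I must also verify $\bigcup_{f\in S^{1}}f(U)=R(S)$. Since $S_{1},S_{2}\subset S$, a nice neighbourhood for $S$ is one for each $S_{i}$, so $R(S)\subset R(S_{1})\cap R(S_{2})$; also $R(S)$ is $S$-invariant by Remark \ref{r}, which together with $U\subset R(S)$ immediately yields $\bigcup_{f\in S^{1}}f(U)\subset R(S)$. For the reverse inclusion I would pull a given $z\in R(S)$ back into $U$ by an alternating procedure: if $z\in U$, stop; if $z\in A=\mathbb{C}\setminus U_{2}$ then $z\in R(S_{2})$, so (using that $U_{2}$ is a fundamental set for $S_{2}$) $z=g(w)$ with $g\in S_{2}$ necessarily nontrivial and $w\in U_{2}$, and $S$-invariance of $R(S)$ puts $w$ in $R(S)$; if $w\in U$, stop, otherwise $w\in B$ and one repeats the step with $w$ and the roles of $S_{1},S_{2}$, $A,B$, $U_{1},U_{2}$ interchanged, and symmetrically if $z\in B$. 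Provided this terminates, it produces $h\in S^{1}$ and $w\in U$ with $h(w)=z$, i.e.\ $z\in\bigcup_{f\in S^{1}}f(U)$, which is condition (4) and finishes the proof.

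The one genuinely delicate point is the termination of this alternating pull-back --- that a point cannot be shuffled between the pieces $A$ and $B$ indefinitely. This is exactly the heart of the Klein--Maskit combination theorem, and it is where the full fundamental-set hypothesis (in particular, that no two distinct points of $U_{i}$ are $S_{i}$-equivalent) is essential; I would carry out this step following the detailed argument of {\cite[Theorem 2.1]{poo1}}. Everything else is the routine ping-pong bookkeeping above together with the elementary properties of $R(S)$ and $L(S)$ recorded in Remark \ref{r}.
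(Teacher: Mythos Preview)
Your proposal is correct and follows essentially the same approach as the paper: the paper gives only a brief sketch, noting that $U\neq\emptyset$ and that $f(U)\cap U=\emptyset$ for every $f\in S$ ``follows easily,'' and then refers to \cite[Theorem~2.1]{poo1} for the detailed argument. Your ping-pong (Klein--Maskit) computation is exactly the fleshed-out version of that sketch, and you defer to the same reference for the delicate termination step in the full fundamental-set case.
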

\begin{proof}[Sketch of the proof]
Let $ U_{1} $,  $ U_{2} $ and $ S_{1} $,   $ S_{2} $ as given in the statement of the theorem, it is clear from the theorem \ref{fs1} that  $ F(S_{1})\neq \emptyset, \; F(S_{2})\neq \emptyset $ and ($ I(S_{1})\neq \emptyset, \; I(S_{2})\neq \emptyset $ if $ U_{1} $ and $ U_{2} $ are fundamental sets of $ S_{1} $ and $ S_{2} $ respectively). Note that $ U \neq \emptyset  $ by the assumption of the theorem and $ f(U) \cap U  = \emptyset $ for every $ f \in S $ follows easily. This proves $ S $ is discontinuous together with  $ U $ is a (partial) fundamental set for $ S $. 

\end{proof}


\begin{thebibliography}{30}

\bibitem {ber} Bergweiler, W. and Rohde, S.: \textit{Omitted value in the domains of normality}, Proc. Amer. Math. Soc. 123 (1995), 1857-1858. 

\bibitem {ere} Eremenko, A.: \textit{On the iterations of entire functions}, Dynamical System and Ergodic Theory, Banach Center Publication Volume 23, Warsaw, Poland, (1989).


\bibitem {gri} Grigorochuk, R. I.: \textit{Semigroups with cancellations of degree growth}, Mat. Zamekti  43, no. 3 (1988), 305-319.

\bibitem {hin} Hinkkanen, A. and Martin, G.J.: \textit{The dynamics of semi-groups of rational functions- I}, Proc. London Math. Soc. (3) 73, 358-384, (1996).

\bibitem {hou} Hua, X.H. and Yang, C.C.: \textit {Dynamic of transcendental functions}, Gordon and Breach Science Publication, (1998).

\bibitem {jur} Jura, A.: \textit{Determining ideals of a given finite index in a finitely presented semigroup}, Demonstratio Math.  11 (1978), 813-827. 

\bibitem {kum4} Kumar, D.: \textit{On escaping set of some families of entire functions and dynamics of composite entire functions}, arXiv: 1406.2453v4 [math. DS] 4 Oct. 2015.


\bibitem {kum2}Kumar, D. and Kumar, S.: \textit{The dynamics of semigroups of transcendental entire functions-II}, arXiv: 1401.0425 v3 (math.DS), May 22, 2014.

\bibitem {kum1} Kumar, D. and Kumar, S.: \textit{Escaping set and Julia set of transcendental semigroups}, arXiv:141.2747 v3 (math. DS) October 10, 2014. 

\bibitem {malt} Maltcev, V. and Ruskuc, M.: \textit{On hopfian cofinite subsemigroups}, arXiv: 1307. 6929v1 [math. GR], 26 Jul. 2013.

\bibitem {poo} Poon, K.K.: \textit{Fatou-Julia theory on transcendental semigroups}, Bull. Austral. Math. Soc. Vol- 58 (1998) PP 403-410.



\bibitem {poo1} Poon, K.K.: \textit{Fatou-Julia theory on transcendental semigroups II}, Bull. Austral. Math. Soc. Vol- 55 (1999) PP 257-262.


\bibitem {rus} Ruskuc, N.: \textit{On large subsemigroups and finiteness conditions of semigroups}, proc. London Math. Soc. (3) 76 (1998), 383-405.

\bibitem {sub1} Subedi, B. H. and Singh, A.: \textit{A short comparison of classical complex dynamics and holomorphic semigroup dynamics}, arXiv: 1803.08368v1[math. DS] 22 March 2018. 

\bibitem {sub2} Subedi, B. H. and Singh, A.: \textit{Invariant properties of Fatou set, Julia set and escaping set of holomorphic semigroup}, arXiv: 1803.09662v1[math. DS] 23 March 2018.

\bibitem {sub2} Subedi, B. H. and Singh, A.: \textit{Escaping set of hyperbolic semigroup}, arXiv:1803.10381v1 [math DS], 28 March, 2018.





 

\end{thebibliography}
\end{document}